\pdfoutput=1
\RequirePackage{ifpdf}
\ifpdf 
\documentclass[pdftex]{sigma}
\else
\documentclass{sigma}
\fi

\newcommand{\wedgebar}{\barwedge}
\newcommand{\stirl}[2]{\genfrac{\{}{\}}{0pt}{}{#1}{#2}}
\newcommand{\de}{\partial}
\newcommand{\Hom}{\operatorname{Hom}}
\newcommand{\Coder}{\operatorname{Coder}}
\newcommand{\N}{\mathbb{N}}
\newcommand{\Z}{\mathbb{Z}}
\newcommand{\Q}{\mathbb{Q}}

\numberwithin{equation}{section}

\newtheorem{Theorem}{Theorem}[section]
\newtheorem{Corollary}[Theorem]{Corollary}
\newtheorem{Conjecture}[Theorem]{Conjecture}
\newtheorem{Lemma}[Theorem]{Lemma}
 { \theoremstyle{definition}
\newtheorem{Definition}[Theorem]{Definition}

\newtheorem{Remark}[Theorem]{Remark} }

\begin{document}

\allowdisplaybreaks

\newcommand{\arXivNumber}{1509.09032}

\renewcommand{\PaperNumber}{053}

\FirstPageHeading

\ShortArticleName{Universal Lie Formulas for Higher Antibrackets}

\ArticleName{Universal Lie Formulas for Higher Antibrackets}

\Author{Marco MANETTI~$^\dag$ and Giulia RICCIARDI~$^{\ddag\S}$}

\AuthorNameForHeading{M.~Manetti and G.~Ricciardi}

\Address{$^\dag$~Dipartimento di Matematica ``Guido Castelnuovo'', Universit\`a degli studi di Roma La Sapienza,\\
\hphantom{$^\dag$}~P.~le Aldo Moro 5, I-00185 Roma, Italy}
\EmailD{\href{mailto:manetti@mat.uniroma1.it}{manetti@mat.uniroma1.it}}
\URLaddressD{\url{http://www1.mat.uniroma1.it/people/manetti/}}

\Address{$^\ddag$~Dipartimento di Fisica ``E.~Pancini'', Universit\`a degli studi di Napoli Federico II,\\
\hphantom{$^\ddag$}~Complesso Universitario di Monte Sant'Angelo, Via Cintia, I-80126 Napoli, Italy}

\Address{$^\S$~INFN, Sezione di Napoli, Complesso Universitario di Monte Sant'Angelo, \\
\hphantom{$^\S$}~Via Cintia, I-80126 Napoli, Italy}
\EmailD{\href{mailto:giulia.ricciardi@na.infn.it}{giulia.ricciardi@na.infn.it}}
\URLaddressD{\url{http://people.na.infn.it/~ricciard/}}

\ArticleDates{Received November 17, 2015, in f\/inal form May 31, 2016; Published online June 06, 2016}

\Abstract{We prove that the hierarchy of higher antibrackets (aka higher Koszul brackets, aka Koszul braces)
of a linear operator $\Delta$ on a commutative superalgebra can be def\/ined by some universal formulas
involving iterated Nijenhuis--Richardson brackets having as arguments $\Delta$ and the multiplication operators. As a byproduct,
we can immediately extend higher antibrackets to noncommutative algebras
in a way preserving the validity of genera\-li\-zed Jacobi identities.}

\Keywords{Lie superalgebras; higher brackets}
\Classification{17B60; 17B70}

\section{Introduction}\label{sec.introduction}

In particle physics, the fundamental interactions of the standard model, the electroweak and the QCD interactions, are described by non-abelian gauge theories. The presence of a gauge symmetry implies the appearance of unphysical degrees of freedom in the Lagrangian, which stand in the way of the usual quantization methods. Typically, the redundant degrees of freedom are removed through a gauge-f\/ixing procedure. Ghosts, i.e., f\/ields with unphysical statistics, are introduced to compensate for ef\/fects of the gauge degrees of freedom and preserve unitarity. The gauge-f\/ixed action retains a nilpotent, odd, global symmetry involving transformations of both f\/ields and ghosts, the Becchi--Rouet--Stora--Tyutin (BRST) symmetry \cite{Becchi:1974xu, Becchi:1974md, Becchi:1975nq, Tyutin:1975qk}. The BRST symmetry has played an important role in quantization, renormalization, unitarity, and other aspects of gauge theories. The Batalin--Vilkovisky formalism of antibrackets and antif\/ields \cite{Batalin:1981jr, Batalin:1984ss, Batalin:1985qj,getzler94} retains BRST symmetry as fundamental principle while dealing with very general gauge theories, included those with open or reducible gauge symmetry algebras. The antibracket formalism covers a broad spectrum of applications, ranging from supergravity to string and topological f\/ield theories. From the mathematical point of view, probably the most convenient approach to Batalin--Vilkovisky formalism is based upon a certain hierarchy of (super)symmetric multilinear maps introduced by Koszul \cite{koszul85} in the framework of dif\/ferential operators, Calabi--Yau manifolds and symplectic geometry.

Higher antibrackets, also known in literature as higher Koszul brackets or Koszul braces, were def\/ined in~\cite{akman, alfaro} as a~slight variation of Koszul construction and give a convenient generalization of the Batalin--Vilkovisky formalism working when the underlying algebra is not unitary and when the square-zero operator fails to be a dif\/ferential operator of second order. Several interesting mathematical properties of higher antibrackets have been studied in \cite{BDA,kravchenko2000} and more recently in~\cite{markl2, markl1}. In particular it is proved therein that higher antibrackets satisfy the generalized Jacobi identities, and then they provide a structure of strong homotopy Lie algebra, for every square-zero linear operator.

More recently, the formalism of higher antibrackets has been conveniently used in~\cite{FMpoisson} in the framework of Poisson geometry; here one of the key points was the existence of certain relations involving the linear, bilinear and trilinear antibrackets, the multiplication maps and the Nijenhuis--Richardson bracket on the space of multilinear operators. The initial motivation of this paper is to extend these relations to any order; more precisely we prove that every higher antibracket of a linear operator $\Delta$ on a commutative superalgebra can be def\/ined by some universal formulas involving iterated Nijenhuis--Richardson brackets, having as arguments $\Delta$ and the multiplication operators. In doing this we also get an explicit description of a universal gauge trivialization (Theorem~\ref{thm.gaugetriviality}) and of some representations of the Lie algebra of vector f\/ields of the line, which we think interesting in its own right (Section~\ref{sec.natural}). Moreover, as additional byproduct, some of the proved universal formulas for higher antibrackets,
when applied to operators in noncommutative algebras, give hierarchies of higher antibrackets satisfying generalized Jacobi identities, cf.~\cite{intrinsic}.

\section{Overview of the main results}\label{sec.overview}

Given a commutative superalgebra $A=A^{0}\oplus A^{1}$ over a f\/ield of characteristic 0 and a linear operator $f\colon A\to A$, the hierarchy of higher antibrackets of $f$ is the sequence of operators $\Phi_f^n\colon A^{\odot n}\to A$, $n>0$, def\/ined by the formulas \cite{koszul85,markl2}:
\begin{gather}
\Phi^1_f(a)= f(a),\nonumber\\
\Phi^2_f(a,b)= f(ab)-f(a)b-(-1)^{|a||b|}f(b)a,\nonumber\\
\Phi^3_f(a,b,c)= f(abc)-f(ab)c-(-1)^{|a|(|b|+|c|)}f(bc)a-(-1)^{|b||c|}f(ac)b\nonumber\\
\hphantom{\Phi^3_f(a,b,c)=}{} +f(a)bc+(-1)^{|a||b|}f(b)ac+(-1)^{|c|(|a|+|b|)} f(c)ab ,\nonumber\\
\cdots\cdots\cdots\cdots\cdots\cdots\cdots\cdots\cdots\cdots\cdots\cdots\cdots\cdots\cdots\cdots\cdots\cdots\cdots\cdots\cdots\nonumber\\
\Phi^n_f(a_1,\ldots,a_n)= \sum_{k=1}^n(-1)^{n-k}\sum_{\sigma\in S(k,n-k)}\epsilon(\sigma)
f(a_{\sigma(1)}\cdots a_{\sigma(k)})a_{\sigma(k+1)}\cdots a_{\sigma(n)} ,\label{equ.defiKoszul}
\end{gather}
where $A^{\odot n}$ is the symmetric $n$th power of $A$, where $|a|=0,1$ is the parity of a nonzero homogeneous element $a\in A$, and
$S(k,n-k)$ is the set of shuf\/f\/les of type $(k,n-k)$, i.e., the set of permutations $\sigma$ of $1,\ldots,n$
such that $\sigma(1)<\cdots<\sigma(k)$ and $\sigma(k+1)<\cdots<\sigma(n)$. The Koszul sign
$\epsilon(\sigma)$ is equal to $(-1)^{\alpha}$, where $\alpha$ is the number of pairs $(i,j)$ such that
$i<j$, $\sigma(i)>\sigma(j)$ and $|a_i||a_j|=1$. The same operators can be conveniently def\/ined by the recursive formulas \cite{akman}:
\begin{gather}
\Phi^1_f(a)=f(a),\nonumber\\
\Phi^{n+1}_f(a_1,\ldots,b,c) = \Phi^{n}_f(a_1,\ldots,bc)-\Phi^{n}_f(a_1,\ldots,b)c-(-1)^{|b||c|}\Phi^{n}_f(a_1,\ldots,c)b,\label{equ.defiakman}
\end{gather}
and dif\/fer from the def\/inition given in \cite{alfaro,BDA} by an extra sign factor appearing because in their set-up the operator $f$ acts on the right.
It is immediate to observe that every higher antibracket is a supersymmetric operator, i.e., we have
\begin{gather*} \Phi_f^n(\ldots,a_{i+1},a_i,\ldots)=(-1)^{|a_i| |a_{i+1}|}\Phi_f^n(\ldots,a_{i},a_{i+1},\ldots)\end{gather*}
for every index $0<i<n$ and every sequence $a_1,\ldots,a_n$ of homogeneous elements.
When $A$ has a unit $1\in A^0$, it is well known, and in any case easy to prove, that $\Phi^{n+1}_f=0$
if and only if $f(1)=0$ and $f$ is a dif\/ferential operator of order $\le n$~\cite{koszul85}.

The importance of higher antibrackets relies essentially on the fact that, up to a degree shifting, they satisfy the generalized Jacobi identities \cite{BDA,kravchenko2000}: this means that for every pair of operators
$f$, $g$ we have
\begin{gather*} \Phi^{n}_{[f,g]}=\sum_{i=1}^n\big[\Phi_f^i,\Phi_g^{n+1-i}\big]\end{gather*}
for every $n$, where $[-,-]$ denotes the Nijenhuis--Richardson bracket (see Section~\ref{sec.NR}) on the Lie superalgebra of symmetric operators
\begin{gather*}D(A)=\Hom^*\big(\oplus_{n>0}A^{\odot n},A\big) .\end{gather*}
The Lie superalgebra $D(A)$ contains as special even elements the multiplication operators
\begin{gather*} \mu_n\colon \ A^{\odot n+1}\to A,\qquad \mu_n(a_0,\ldots,a_n)=a_0\cdots a_n ,\qquad n\ge 0 .\end{gather*}
The associated adjoint operators
\begin{gather*} \rho_n\colon \ D(A)\to D(A),\qquad \rho_n(\omega)=[\mu_n,\omega]  \end{gather*}
are even derivations. A tedious but straightforward computation shows that:
\begin{gather*}
\bullet \ \Phi^2_f=-\rho_1(f)=-[\mu_1,f];\\
\bullet \ \Phi^3_f=\dfrac{1}{2}\big(\rho_1^2+\rho_2\big)(f)=\dfrac{1}{2}([\mu_1,[\mu_1,f]]+[\mu_2,f]);\\
\bullet \ \Phi^4_f=-\dfrac{1}{6}\big(\rho_1^3+3\rho_1\rho_2+6\rho_3\big)f=
-\dfrac{1}{6}([\mu_1,[\mu_1,[\mu_1,f]]]+3[\mu_1,[\mu_2,f]]+6[\mu_3,f]);\\
\bullet \ \Phi^5_f=\left(\dfrac{1}{30}\rho_1^4+\dfrac{1}{5}\rho_1^2\rho_2+\rho_1\rho_3+3\rho_4\right)f;\\
\bullet \ \Phi^6_f=-\dfrac{1}{3}\left(\dfrac{1}{90}\rho_1^5+\dfrac{1}{9}\rho_1^3\rho_2+\rho_1^2\rho_3+7\rho_1\rho_4+28\rho_5\right)f.
\end{gather*}
The above expressions for $\Phi^2_f$ and $\Phi^3_f$ were also pointed out and conveniently used in~\cite{FMpoisson} in the framework of Poisson geometry.

It is natural to ask whether similar formulas hold for every $n$: more precisely, we ask for a~sequence of noncommutative polynomials $P_n(\rho_1,\ldots,\rho_n)$ with rational coef\/f\/icients such that, for every~$f$, we have
\begin{gather}\label{equ.polinomigeneratori}
 \Phi_f^{n+1}=P_n(\rho_1,\ldots,\rho_n)f .
 \end{gather}
A f\/irst way to construct, at least in principle, the polynomials $P_n$, follows immediately from the following result.

\begin{Theorem}\label{thm.gaugetriviality}
In the above notation, for every linear operator $f\colon A\to A$ we have
\begin{gather*} \sum_{i=1}^{\infty}\Phi_f^i=\exp\left(-\sum_{n=1}^{\infty}K_n\rho_n\right)f, \end{gather*}
where $K_n$ is the sequence of rational numbers defined by the recursive equations
\begin{gather*} K_1=1,\qquad K_n=\frac{-2}{(n+2)(n-1)}\sum_{i=1}^{n-1} \stirl{n+1}{i}K_i ,\end{gather*}
where
\begin{gather*}\stirl{n+1}{i}=\frac{1}{ i! }\sum_{j=0}^i (-1)^{i-j}\binom{i}{j}j^{n+1}\end{gather*}
are the Stirling numbers of the second kind.
\end{Theorem}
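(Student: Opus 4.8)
The plan is to reduce the identity to a single equation between formal power series in two variables $u,v$. Call $\omega=\sum_m\omega_m\in\widehat{D(A)}$, with $\omega_m\in\Hom(A^{\odot m},A)$, \emph{admissible} if
\begin{gather*}
\omega_m(x_1,\ldots,x_m)=\sum_{S\subseteq\{1,\ldots,m\}}\epsilon(S)\,d_{|S|,\,m-|S|}(\omega)\;f\Big(\prod_{i\in S}x_i\Big)\prod_{i\notin S}x_i ,
\end{gather*}
where $\epsilon(S)$ is the Koszul sign carrying $(x_1,\ldots,x_m)$ to the concatenation of $S$ and its complement in increasing order, and the rationals $d_{s,l}(\omega)$ (for $s\ge1$, $l\ge0$) depend only on the two cardinalities. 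A short induction shows that $f$ is admissible (with $d_{1,0}=1$ and the rest zero), that admissibility is preserved by each $\rho_n$ — each summand of $[\mu_n,\omega_m]=\mu_n\wedgebar\omega_m-\omega_m\wedgebar\mu_n$ either inserts a product of $x$'s into one argument slot of $\omega_m$ or feeds $\omega_m$ as an argument of $\mu_n$, and both preserve the displayed shape — and hence that $\exp\big(-\sum_n K_n\rho_n\big)f$ is admissible (the exponential converges arity by arity since $\rho_n$ raises arity by $n$). An admissible $\omega$ is determined by its generating function $D_\omega(u,v):=\sum_{s\ge1,\,l\ge0}d_{s,l}(\omega)\,\frac{u^s v^l}{s!\,l!}$; by \eqref{equ.defiKoszul}, $\sum_i\Phi_f^i$ is admissible with $d_{s,l}=(-1)^l$, so $D_{\sum_i\Phi_f^i}=(e^u-1)e^{-v}$, while $D_f=u$. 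It therefore suffices to show $D_{\exp(-\sum_n K_n\rho_n)f}=(e^u-1)e^{-v}$.

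A direct computation with the shuffle formula for $\wedgebar$ gives the effect of $\rho_n$ on the coefficients,
\begin{gather*}
d_{s,l}(\rho_n\omega)=\Big[\binom{l}{n}-\binom{l}{n+1}\Big]d_{s,\,l-n}(\omega)-\binom{s}{n+1}d_{s-n,\,l}(\omega) ,
\end{gather*}
which is equivalent to saying that $\rho_n$ acts on generating functions as the first-order operator
\begin{gather*}
\widehat\rho_n=\frac{v^n}{n!}-\frac{1}{(n+1)!}\big(u^{n+1}\partial_u+v^{n+1}\partial_v\big) .
\end{gather*}
(As a check, $[\widehat\rho_m,\widehat\rho_n]=(m-n)\frac{(m+n+1)!}{(m+1)!(n+1)!}\widehat\rho_{m+n}$, matching $[\mu_m,\mu_n]=(m-n)\frac{(m+n+1)!}{(m+1)!(n+1)!}\mu_{m+n}$; this is the representation of the Lie algebra of vector fields on the line mentioned in the introduction.) Setting $M(t)=\sum_{n\ge1}\frac{K_n}{(n+1)!}t^{n+1}$ and $L(t)=M'(t)=\sum_{n\ge1}\frac{K_n}{n!}t^n$, summation gives the commuting (disjoint-variable) decomposition
\begin{gather*}
-\sum_{n\ge1}K_n\widehat\rho_n=M(u)\partial_u+\big(M(v)\partial_v-M'(v)\big) ,
\end{gather*}
so that $D_{\exp(-\sum_n K_n\rho_n)f}=\exp\big(M(u)\partial_u\big)\,\exp\big(M(v)\partial_v-M'(v)\big)(u)$.

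It remains to evaluate this, and the key point is that the recursion for the $K_n$ is equivalent to the functional equation $M(e^t-1)=e^tM(t)$: since $\stirl{n+1}{n}=\binom{n+1}{2}$ and $\binom{n+1}{2}-\frac{(n+2)(n-1)}{2}=1$, the recursion is the same as $\sum_{i=1}^n\stirl{n+1}{i}K_i=K_n$, and feeding $\sum_m\stirl{m}{i}\frac{t^m}{m!}=\frac{(e^t-1)^i}{i!}$ into $L(e^t-1)$ yields $L(e^t-1)=L(t)+M(t)$, whence $M(e^t-1)=\int_0^{e^t-1}L(u)\,du=\int_0^t L(e^s-1)e^s\,ds=\int_0^t\frac{d}{ds}\big(e^sM(s)\big)\,ds=e^tM(t)$. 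With $K_1=1$ (so $M(t)=\frac12t^2+O(t^3)$) this equation says exactly that $M(t)\partial_t$ is the logarithm of the formal substitution $t\mapsto e^t-1$; hence $\exp\big(M(u)\partial_u\big)$ is that substitution, and the time-$1$ flow $\chi$ of $M(v)\partial_v$ satisfies $\chi(v)=e^v-1$, so $\frac{M(v)}{M(\chi(v))}=e^{-v}$ by the functional equation. Since $M(v)\partial_v-M'(v)$ is conjugate to $M(v)\partial_v$ by multiplication by $M(v)$, its exponential sends $h$ to $\frac{M(v)}{M(\chi(v))}(h\circ\chi)$, which on the $v$-independent function $u$ equals $e^{-v}u$; applying $\exp\big(M(u)\partial_u\big)$ then produces $e^{-v}(e^u-1)$. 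Thus $D_{\exp(-\sum_n K_n\rho_n)f}=(e^u-1)e^{-v}=D_{\sum_i\Phi_f^i}$, and since an admissible element is determined by its generating function, $\exp(-\sum_n K_n\rho_n)f=\sum_i\Phi_f^i$.

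The substantive steps are the evaluation of $\widehat\rho_n$ and the reformulation of the recursion as $M(e^t-1)=e^tM(t)$ — this is what makes the exponential computable in closed form, and reading off its degree-$n$ part yields the universal polynomials $P_n$ of \eqref{equ.polinomigeneratori}. The only routine-but-delicate point is the supercommutative sign bookkeeping: one must check that admissibility and the formula for $\rho_n$ on the $d$-data survive once the Koszul signs $\epsilon(S)$ are carried along, which, since every $\mu_n$ is even, is just the usual sign rule.
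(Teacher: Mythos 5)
Your argument is correct, but it takes a genuinely different route from the paper's. The paper proves the theorem by passing to the reduced symmetric coalgebra $\overline{S^c}(A)$: writing $\mu=\sum_n K_n\mu_n$, it shows (via the inversion formula, a reduction of a universal polynomial identity to the case $A=\Q$, and duality between $\Q[x]$ and $\Q[[t]]$) that $P(\exp(\widehat{\mu}))=\sum_n\mu_n$, whence $\sum_n\widehat{\Phi^n_f}=\exp(-\widehat{\mu})\widehat{f}\exp(\widehat{\mu})$, and the theorem follows by applying the Lie isomorphism $P$. You instead stay inside $D(A)$ and parametrize the relevant universal elements by a two--variable exponential generating function: your ``admissible'' elements are exactly the natural derived brackets $\sum\psi_{ij}\,\mu_j\wedgebar(f\wedgebar\mu_i)$ of Section~\ref{sec.natural}, and your operator $\widehat{\rho}_n=\frac{v^n}{n!}-\frac{1}{(n+1)!}(u^{n+1}\partial_u+v^{n+1}\partial_v)$ is precisely the representation $\rho(l_n)$ of that section transported to Taylor coefficients (your formula for $d_{s,l}(\rho_n\omega)$ matches the identity $\rho(l_k)\Phi^{n,i}=\bigl(\binom{n-i+k}{k}-\binom{n-i+k}{k+1}\bigr)\Phi^{n+k,i}-\binom{k+i}{k+1}\Phi^{n+k,i+k}$ used there). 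In effect you use the machinery the paper reserves for Theorem~\ref{thm.standardform} to prove Theorem~\ref{thm.gaugetriviality} directly, by integrating the flow of $M(u)\partial_u+M(v)\partial_v-M'(v)$ in closed form. The paper's route buys the structural statement $F^{-1}\widehat{f}F=\sum_n\widehat{\Phi^n_f}$ and a clean reduction to $A=\Q$; yours buys a self-contained computation that starts from the recursion appearing in the statement (deriving $M(e^t-1)=e^tM(t)$ from it, rather than going through $\operatorname{itlog}$ and Lemma~\ref{lem.aschelemma}) and produces the explicit generating function $(e^u-1)e^{-v}$ for the universal coefficients.

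Two points deserve an extra sentence. First, Julia's equation $M(e^t-1)=e^tM(t)$ is a necessary condition for $\exp(M\partial_t)(t)=e^t-1$ but does not by itself force it: the entire flow $\exp(sM\partial_t)(t)$ satisfies the same equation. You need to add that the functional equation together with $M=\frac12t^2+O(t^3)$ determines $M$ uniquely (comparing coefficients of $t^m$ pins down $K_{m-1}$ for $m\ge4$, which is exactly the recursion), and that $\exp(M\partial_t)(t)=t+M(t)+\cdots$ and $e^t-1$ agree to order $t^2$, so the uniqueness of solutions $h=t+\frac12 t^2+O(t^3)$ of $M(h)=Mh'$ identifies them. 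Second, multiplication by $M(v)$ is not invertible in $\Q[[v]]$, so the conjugation giving $\exp(M\partial_v-M')h=\frac{M(v)}{M(\chi(v))}h(\chi(v))$ should be carried out in $\Q((v))$ or verified directly; the outcome is a genuine power-series identity since $M(v)/M(\chi(v))=e^{-v}$. Neither point is a real obstacle, and the supercommutative sign bookkeeping you defer is indeed routine because every $\mu_n$ is even.
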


The f\/irst 12 terms of $K_n$ are
\begin{gather*}K_1=1,\quad K_2=-\frac{1}{2},\quad K_3=\frac{1}{2},\quad K_4=-\frac{2}{3},\quad
K_5=\frac{11}{12},\quad K_6=-\frac{3}{4},\quad K_7=-\frac{11}{6}, \\
K_8=\frac{29}{4},\quad K_9=\frac{493}{12},\quad K_{10}=-\frac{2711}{6},
\quad K_{11}=-\frac{12406}{15},\quad K_{12}=\frac{2636317}{60},\end{gather*}
and we refer to Table~\ref{tab.tabellakoszul} for the values of $n! K_n$ for $n\le 16$. As kindly reported to the authors by one referee, this sequence appeared in certain computations about Hurwitz numbers contained in the paper~\cite{SZ} by Shadrin and Zvonkine, and especially in Conjecture~A.9, proved later by Aschenbrenner~\cite{Asche}; more details about that conjecture and its proof will be given in Remark~\ref{rem.proofA9}.

In practice it is more convenient to calculate the polynomials $P_n$ by using the following easy consequence of Theorem~\ref{thm.gaugetriviality}.

\begin{Corollary}\label{cor.recursive}
In the above notation, for every linear operator $f\colon A\to A$ we have
\begin{gather*} \Phi^1_f=f,\qquad \Phi^{n+1}_f=\frac{1}{n}\sum_{h=1}^n(-1)^{h}\rho_h\big(\Phi^{n-h+1}_f\big)=\frac{1}{n}\sum_{h=1}^n(-1)^{h}\big[\mu_h,\Phi^{n-h+1}_f\big].\end{gather*}
\end{Corollary}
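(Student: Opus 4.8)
The plan is to package the whole hierarchy into one generating series in which a formal variable records the arity, to differentiate, and to recognise the resulting differential equation as the asserted recursion. Equip $D(A)$ with the weight grading in which an operator $A^{\odot k}\to A$ has weight $k-1$; then each $\Phi^i_f$ is homogeneous of weight $i-1$, each $\mu_h$ has weight $h$, and each $\rho_h=[\mu_h,-]$ raises weight by $h$. Introduce a formal parameter $t$ and apply the grading automorphism $\sigma_t$ (multiplying a weight‑$d$ element by $t^d$, so that $\sigma_t\rho_h\sigma_t^{-1}=t^h\rho_h$ and $\sigma_t f=f$) to the identity of Theorem~\ref{thm.gaugetriviality}; this yields, in $D(A)[[t]]$,
\[
 \Phi_f(t):=\sum_{i\ge 1}t^{\,i-1}\,\Phi^i_f \;=\; \exp\!\Big(-\sum_{n\ge 1}K_n t^n\rho_n\Big)f \;=:\; e^{-G(t)}f .
\]
Evaluating at $t=0$ gives $\Phi^1_f=f$. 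Comparing coefficients of $t^n$ in the differential equation
\[
 t\frac{d}{dt}\Phi_f(t)=L(t)\,\Phi_f(t),\qquad L(t):=\sum_{h\ge 1}(-1)^h t^h\rho_h ,
\]
the left side contributes $n\Phi^{n+1}_f$ and the right side contributes $\sum_{h=1}^n(-1)^h\rho_h(\Phi^{n+1-h}_f)$, so this differential equation is \emph{equivalent} to the asserted recursion; it remains to prove it.

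Differentiating $e^{-G(t)}$ by the standard formula gives
\[
 t\frac{d}{dt}\,e^{-G(t)} \;=\; -\Big(\frac{1-e^{-\operatorname{ad}_{G(t)}}}{\operatorname{ad}_{G(t)}}\,\dot G(t)\Big)\,e^{-G(t)},\qquad \dot G(t):=t\frac{d}{dt}G(t)=\sum_{n\ge 1}nK_n t^n\rho_n ,
\]
which is legitimate in the algebra of operators on $D(A)$ because each $\rho_h$ preserves the completed weight filtration, so all series converge. Feeding this into the displayed differential equation one sees that the equation follows as soon as, as operators,
\[
 L(t) \;=\; \frac{e^{-\operatorname{ad}_{G(t)}}-1}{\operatorname{ad}_{G(t)}}\,\dot G(t)
\]
in the (completed) Lie algebra generated by the $\rho_h$ — i.e.\ as soon as the left logarithmic derivative of the curve $t\mapsto e^{-G(t)}$ equals $L(t)$, equivalently that $e^{-G(t)}$ is the flow, in the time $\log t$, of the time‑dependent generator $L(t)$; this is the precise sense in which $G$ is the universal gauge trivialization.

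Finally I would verify this last identity using the Lie‑algebra structure of the $\mu_h$. A direct computation with the Nijenhuis--Richardson bracket gives $[\mu_m,\mu_n]=\big(\binom{m+n+1}{n+1}-\binom{m+n+1}{n}\big)\mu_{m+n}$, so the $\mu_h$ span a copy of the Lie algebra of polynomial vector fields on the affine line and, correspondingly, $[\rho_m,\rho_n]$ is an explicit rational multiple of $\rho_{m+n}$. Hence every iterated bracket $(\operatorname{ad}_{G(t)})^j\dot G(t)$ collapses to a sum of terms $(\text{rational number})\,t^k\rho_k$, and extracting the coefficient of $t^k\rho_k$ on both sides turns the identity into one numerical equation per $k$ relating $K_1,\dots,K_k$; the content is that each such equation is exactly the $k$‑th instance of the recursion $K_k=\frac{-2}{(k+2)(k-1)}\sum_{i=1}^{k-1}\stirl{k+1}{i}K_i$ of Theorem~\ref{thm.gaugetriviality} (as one checks directly for small $k$). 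I expect this combinatorial identification — matching the nested‑bracket, Magnus‑type coefficients against the Stirling‑number recursion — to be the main obstacle; it is in essence the same computation that underlies Theorem~\ref{thm.gaugetriviality}, so once that machinery is in place the Corollary drops out as a short formal consequence.
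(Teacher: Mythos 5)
Your reduction of the Corollary to the differential equation $t\frac{d}{dt}\Phi_f(t)=L(t)\Phi_f(t)$ with $L(t)=\sum_{h\ge1}(-1)^ht^h\rho_h$ is correct, and the Magnus-type derivative formula is correctly invoked; the convergence remarks are also fine since each $\rho_h$ raises the weight. But the proof then hinges entirely on the operator identity
\begin{gather*}
L(t)\;=\;\frac{e^{-\operatorname{ad}_{G(t)}}-1}{\operatorname{ad}_{G(t)}}\,\dot G(t),
\end{gather*}
which you do not prove. Worse, the route you sketch for it rests on a false premise: extracting the coefficient of $t^k\rho_k$ does \emph{not} reproduce ``exactly the $k$-th instance'' of the Stirling recursion for $K_k$. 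Already at $k=3$ the identity yields $-3K_3-K_2=-1$, whereas the recursion of Theorem~\ref{thm.gaugetriviality} reads $K_3=-\frac{1}{5}\big(\stirl{4}{1}K_1+\stirl{4}{2}K_2\big)$, i.e.\ $5K_3+7K_2+K_1=0$; these are consistent but genuinely different linear relations. Proving that the nested-bracket (Magnus) coefficients produce, for every $k$, an equation compatible with the Stirling recursion is precisely the hard combinatorial content, and it is not ``the same computation that underlies Theorem~\ref{thm.gaugetriviality}'' in any sense that lets you cite it; so as written the argument has a genuine gap at its only nontrivial step. (The identity is true, and could presumably be established by transporting everything to the Lie algebra of formal vector fields on the line and invoking Julia's equation, Lemma~\ref{lem.lemmachiave}, but that is real work you have not carried out, and one must also handle the interplay between your grading parameter $t$ and the vector-field variable.)

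For contrast, the paper derives the Corollary from Theorem~\ref{thm.gaugetriviality} by an essentially computation-free argument that avoids any further identity among the $K_n$: since $\exp\big({-}\sum K_n\rho_n\big)$ is a Lie algebra automorphism of $D(A)$, one gets the generalized Jacobi identity $\Phi^{n+1}_{[f,g]}=\sum_i\big[\Phi^i_f,\Phi^{n+2-i}_g\big]$; applying it to $g=\operatorname{Id}$ (so $[\operatorname{Id},f]=0$) and using the elementary facts $\Phi^{i+1}_{\operatorname{Id}}=(-1)^i\mu_i$ and $[\mu_0,\omega]=-n\omega$ for $\omega\in D_n(A)$ gives the recursion at once. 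The numerical content you are trying to verify by hand is smuggled in there through the single easy computation of $\Phi_{\operatorname{Id}}$. If you want to keep your generating-function framework, the cleanest repair is to note that Theorem~\ref{thm.gaugetriviality} applied to $f=\operatorname{Id}$ says $\sum_n(-1)^n\mu_n=\exp\big({-}\operatorname{ad}_{\sum K_n\mu_n}\big)\mu_0$, and to deduce your key identity from that rather than from the Stirling recursion.
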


The advantage of Corollary~\ref{cor.recursive} with respect to standard formulas \eqref{equ.defiKoszul} and
\eqref{equ.defiakman} is that it gives an easy way to def\/ine higher antibrackets
also for associative noncommutative superalgebras, providing that the symmetric operators $\mu_n\in D_n(A)$ are now def\/ined as
\begin{gather*} \mu_n(a_0,\ldots,a_n)=\frac{1}{(n+1)!}\sum_{\sigma\in \Sigma_{n+1}}\epsilon(\sigma)
a_{\sigma(0)}a_{\sigma(1)}\cdots a_{\sigma(n)} .\end{gather*}
In fact, also in this more general situation the brackets def\/ined in Corollary~\ref{cor.recursive}
satisfy the generalized Jacobi identities (see Remark~\ref{rem.noncommutative}),
while neither the symmetrization of~\eqref{equ.defiKoszul} nor the symmetrization of~\eqref{equ.defiakman} do. Extensions of higher antibrackets to noncommutative associative algebras were discussed in~\cite{alfaro,BM}, and a complete description has been given, with dif\/ferent approaches, in~\cite{derived,kapranovbrackets,Bering,intrinsic}.

The polynomials $P_n(\rho_1,\ldots,\rho_n)$ are not uniquely determined by equation~\eqref{equ.polinomigeneratori}. In fact, according to formula~\eqref{equ.bracketmuenne}, we have
\begin{gather}\label{equ.bracketrhoenne}
[\rho_n,\rho_m]=(n-m)\frac{(n+m+1)!}{(n+1)!(m+1)!}\rho_{n+m},\end{gather}
and therefore, by Poincar\'e--Birkhof\/f--Witt theorem, we can choose every $P_n$ as a linear combination with rational coef\/f\/icients of monomials of type
\begin{gather*} \rho_1^{i_1}\rho_2^{i_2}\cdots \rho_n^{i_n},\qquad \text{with}\qquad\sum_{h=1}^n hi_h=n .\end{gather*}

The following theorem, which is mathematically nontrivial, tells us in particular that, in the (super) commutative case, we can restrict to monomials such that $i_2+\cdots+i_n\le 1$.

\begin{Theorem}\label{thm.standardform}
In the notation above, for every integer $n>0$, there exists an unique sequence
$c_1^n,\ldots,c_n^n$ of rational numbers
such that, for every linear operator $f\colon A\to A$, we have
\begin{gather*}\Phi^{n+1}_f=\big(c_1^n\rho_1^n+c^n_2\rho_1^{n-2}\rho_2+c^n_3\rho_1^{n-3}\rho_3+\cdots+c^n_n\rho_n\big)f.\end{gather*}
\end{Theorem}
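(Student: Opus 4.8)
\textbf{Existence} is the substantial direction. Let $\mathfrak n\subset D(A)$ be the $\mathbb{Q}$-Lie subalgebra spanned by $\rho_1,\rho_2,\dots$. By \eqref{equ.bracketrhoenne} the span $\mathfrak m$ of $\rho_2,\rho_3,\dots$ is an ideal of $\mathfrak n$ with abelian one-dimensional quotient $\mathfrak n/\mathfrak m=\mathbb{Q}\rho_1$, so Poincar\'e--Birkhoff--Witt gives a graded linear isomorphism $U(\mathbb{Q}\rho_1)\otimes U(\mathfrak m)\to U(\mathfrak n)$ (with $\deg\rho_n=n$); hence, in the completion of $U(\mathfrak n)$ for this grading, $\exp\big(-\sum_n K_n\rho_n\big)=\sum_{j\ge0}\rho_1^{\,j}u_j$ with $u_j\in\widehat U(\mathfrak m)$. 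Decomposing each $u_j$ into homogeneous components $u_{j,\ell}\in U(\mathfrak m)_\ell$ and invoking Theorem~\ref{thm.gaugetriviality}, the degree-$n$ part of $\exp\big(-\sum_n K_n\rho_n\big)f$ reads
\begin{gather*}\Phi^{n+1}_f=\sum_{j+\ell=n}\rho_1^{\,j}\big(u_{j,\ell}\,f\big),\qquad u_{j,\ell}\in U(\mathfrak m)_\ell .\end{gather*}
Because $\rho_1\big(\rho_1^{\,m-k}\rho_k f\big)=\rho_1^{\,m+1-k}\rho_k f$, this lies in the span of the standard operators $\rho_1^{\,n-k}\rho_k f$ as soon as each $U(\mathfrak m)_\ell\cdot f$ does, and an induction on the number of factors reduces that, in turn, to the \emph{basic reduction claim}: for all $a,b\ge2$ the operator $\rho_a\rho_b f$ is a rational combination of $\rho_1^{\,a+b-k}\rho_k f$, $1\le k\le a+b$. (Equivalently one can run the induction on $n$ in Corollary~\ref{cor.recursive}: after reordering with \eqref{equ.bracketrhoenne} the same claim is exactly what puts $\rho_h(\Phi^{m+1}_f)$ back into standard form.)

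\textbf{The basic reduction claim} is the one genuinely nonformal ingredient, and it is where the representations of Section~\ref{sec.natural} come in. Adjoin $\rho_0:=[\mu_0,-]=[\operatorname{id}_A,-]$, which acts on the arity-$q$ summand of $D(A)$ by the scalar $1-q$; then $[\rho_0,\rho_n]=-n\rho_n$, so $\mathfrak g:=\mathbb{Q}\rho_0\ltimes\mathfrak n$ is a copy of the Lie algebra of formal vector fields on the line vanishing at the origin, graded by $\rho_0$-weight. Since $f$ has arity $1$ we get $\rho_0 f=0$: $f$ is a lowest-weight vector and $M:=U(\mathfrak g)\cdot f$ is a quotient of the corresponding graded Verma module. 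The basic claim is equivalent to the assertion that $M$ is \emph{small} — its weight-$n$ space has dimension $\le n$ and is spanned by the standard monomials $\rho_1^{\,n-k}\rho_k f$. I would prove this by producing an explicit model: realize $M$ inside (a completion of) one of the natural $\operatorname{Vect}$-modules constructed in Section~\ref{sec.natural} — a tensor-density module, or a module of functions built functorially from the pair $(A,f)$ — match it with $M$, and read off the defining relations; the first nontrivial one, expressing $\rho_2^2 f$ in standard form, already dictates the general pattern. This identification of $M$ as a $\operatorname{Vect}$-representation is the step I expect to be the main obstacle; everything around it is bookkeeping with PBW and \eqref{equ.bracketrhoenne}.

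\textbf{Uniqueness.} The standard monomials $\rho_1^{\,n-k}\rho_k$, $1\le k\le n$, are $n$ distinct members of a Poincar\'e--Birkhoff--Witt basis of $U(\mathfrak n)$, hence linearly independent there; so uniqueness is precisely the statement that for some commutative superalgebra $A$ and some linear $f$ the evaluation $u\mapsto uf$ is injective on their span inside $D(A)$. It suffices to exhibit one such pair: with $A=\mathbb{Q}[x_1,\dots,x_n]$ and $f$ a sufficiently generic linear operator (for instance a generic homogeneous differential operator of large order), one evaluates the symmetric operators $\rho_1^{\,n-k}\rho_k f$ on a well-chosen $(n+1)$-tuple of monomials — where the $\mu_j$ act in a transparent combinatorial way — and obtains a linear system in the $c^n_k$ with nonsingular (triangular, or Vandermonde-type) coefficient matrix. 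Together with the existence bound $\dim M_n\le n$ this both establishes the independence and pins the $c^n_k$ down, and incidentally shows $\dim M_n=n$.
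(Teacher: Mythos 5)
Your overall architecture is reasonable and not far in spirit from the paper's: reduce everything via Poincar\'e--Birkhoff--Witt and \eqref{equ.bracketrhoenne} to the claim that iterated brackets $\rho_{a_1}\cdots\rho_{a_r}f$ with all $a_i\ge 2$ lie in the span of the standard monomials $\rho_1^{n-k}\rho_k f$, and get uniqueness from linear independence of those monomials in a suitable realization. But the step you yourself flag as ``the main obstacle'' --- the basic reduction claim, equivalently the bound $\dim M_n\le n$ for $M=U(\mathfrak{g})\cdot f$ --- is precisely the mathematical content of the theorem, and what you offer for it (``realize $M$ inside a tensor-density module, match it with $M$, and read off the defining relations'') is a research plan, not an argument: the model is never produced and not even the first relation, putting $\rho_2^2 f$ in standard form, is computed. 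There is also a quantifier problem: $M$ depends on the pair $(A,f)$, so smallness of $M$ for one pair only yields coefficients valid for that pair; universal coefficients require working with the space of \emph{natural} $n$-ary operations (a universal $(A,f)$), while uniqueness requires a pair where the standard monomials remain independent, and your text does not separate these two demands. The uniqueness paragraph has the same character --- the ``nonsingular (triangular, or Vandermonde-type)'' linear system is asserted, not exhibited.

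For comparison, the paper closes exactly this gap in Section~\ref{sec.natural}. It introduces the algebra $\mathfrak{a}$ of endomorphisms of $\Q[x]$ vanishing on $1$, whose elements $\Phi^{n,i}$ are the universal natural operations, so that the space $V^n$ of arity-$n$ natural operations is honestly $n$-dimensional; it checks that $\Psi\mapsto\Psi_f$ is a morphism of $\mathfrak{g}$-modules; it proves by an explicit inductive matrix computation that $\rho_1^{n-1}\big(\Phi^{2,2}\big)$ together with the $n$ standard monomials $\rho_1^{n-k}\rho_k\big(\Phi^{1,1}\big)$ form a basis of $V^{n+1}$; and it kills the one extra coefficient by evaluating on the derivation $\de$, on which $\Phi^{n+1}$ and every standard monomial vanish while $\rho_1^{n-1}\big(\Phi^{2,2}_{\de}\big)\ne 0$. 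Note that this same evaluation-on-a-derivation trick is what would prove your basic reduction claim once the basis of $V^{a+b+1}$ is known, since $\rho_a\rho_b\big(\Phi^{1,1}_{\de}\big)=[\mu_a,[\mu_b,\de]]=0$ as well; so you are one genuine lemma --- the dimension count and explicit basis of $V^n$ --- away from a complete proof, but as written that lemma is missing and everything rests on it.
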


In Table~\ref{tab.coefficienti} we list the values of $(-1)^{n}n! c^n_i$ for every $n\le 10$:
as we shall see later, for a~f\/ixed~$n$, the $n$ coef\/f\/icients~$c_i^n$ can be computed as the solution of a suitable system of~$n+1$ linear equations. Moreover, some numerical computations suggest the validity of the following conjecture (verif\/ied for $n\le 12$) which we are not able to prove at the moment in full generality.

\begin{Conjecture} For every $n\ge 2$ the coefficients $c^n_i$ of Theorem~{\rm \ref{thm.standardform}} are given by the formula
\begin{gather*}
c_i^n=\frac{\displaystyle (-1)^{n} \prod_{j=2}^{i}\dfrac{n(n-1)-(j-1)(j-2)}{2}}
{\displaystyle \sum_{h=2}^n h\left(\prod_{j=2}^{h}\dfrac{n(n-1)-(j-1)(j-2)}{2}\right)\left(\prod_{j=h}^{n-1}\dfrac{(1-j)(j+2)}{2}\right)} ,
\end{gather*}
where every empty product is intended to be equal to $1$. Moreover $(-1)^{n}c^n_i>0$ for every $n\ge i\ge 1$.
\end{Conjecture}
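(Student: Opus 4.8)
The plan is to reduce the conjecture to a short list of explicit arithmetic identities and verify them, relying on the uniqueness in Theorem~\ref{thm.standardform}: it suffices to check that the rational numbers $c_i^n$ defined by the claimed formula satisfy $\big(c_1^n\rho_1^n+c_2^n\rho_1^{n-2}\rho_2+\dots+c_n^n\rho_n\big)f=\Phi^{n+1}_f$ for every $f$. For this I would use the over-determined system of $n+1$ linear equations in the $c_i^n$ alluded to after the theorem, realized concretely by evaluating the operator $c_1^n\rho_1^n+\dots+c_n^n\rho_n$ on well-chosen test operators of $A=\Q[x]$: the identity $\mathrm{id}=\mu_0$, for which $\Phi^{n+1}_{\mu_0}=(-1)^n\mu_n$ (in \eqref{equ.defiKoszul} every shuffle term collapses to $a_1\cdots a_{n+1}$ and $\sum_{k\ge 1}(-1)^{n+1-k}\binom{n+1}{k}=(-1)^n$), and the powers $\partial^j$ of a derivation, for which $\Phi^{n+1}_{\partial^j}=0$ whenever $1\le j\le n$ by Koszul's characterization, since $\partial^j$ has order $j\le n$ and annihilates the unit.

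The first observation is that the normalization equation coming from $\mu_0$ holds for the claimed $c_i^n$ almost tautologically. From \eqref{equ.bracketmuenne}--\eqref{equ.bracketrhoenne} one gets $\rho_i(\mu_0)=[\mu_i,\mu_0]=i\,\mu_i$ and $\rho_1(\mu_j)=\tfrac{(1-j)(j+2)}{2}\mu_{j+1}$, hence $\rho_1^{n-i}\rho_i(\mu_0)=i\big(\prod_{j=i}^{n-1}\tfrac{(1-j)(j+2)}{2}\big)\mu_n$, and the $\mu_0$-equation reads $\sum_{i=1}^{n}i\,c_i^n\prod_{j=i}^{n-1}\tfrac{(1-j)(j+2)}{2}=(-1)^n$. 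In the conjectural expression the $i=1$ summand carries the vanishing factor $(1-1)$, and the remaining terms reproduce exactly the displayed denominator, so the equation degenerates to the identity $\tfrac{(-1)^n}{\mathrm{denom}}\cdot\mathrm{denom}=(-1)^n$. Hence the entire arithmetic content of the conjecture lies in the numerators $\prod_{j=2}^{i}\tfrac{n(n-1)-(j-1)(j-2)}{2}$; writing $n(n-1)-(j-1)(j-2)=(n+1-j)(n+j-2)$ this collapses to $\tfrac{(n+i-2)!}{2^{i-1}(n-i)!}$, so the conjecture amounts to the single assertion $c_i^n=(-1)^n\tfrac{(n+i-2)!}{2^{i-1}(n-i)!}\big/\mathrm{denom}$, with $\mathrm{denom}$ the (hypergeometric-type) sum displayed in the statement.

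The remaining, $\partial^j$-equations are what must force these numerators. To exploit them one needs to compute $\rho_1^{n-i}\rho_i(\partial^j)$, and the natural tool is the symbol calculus underlying the natural representation of Section~\ref{sec.natural}: at the symbol level $\Phi^2$ sends $P(\xi)$ to $P(\xi+\eta)-P(\xi)-P(\eta)$, so $\rho_1$ acts in a controllable way, and the $\rho_n$ realize (after the scaling that makes $\rho_n$ a multiple of a standard generator) the positive part of the Lie algebra of polynomial vector fields on the line; the appearance of the eigenvalue-type quantities $n(n-1)$ and $(j-1)(j-2)$ is a strong hint that this is the right arena. Carrying this out should turn each equation $\Phi^{n+1}_{\partial^j}=0$ into a concrete polynomial identity in the $c_i^n$, and substituting the closed form one is left with a (presumably telescoping or hypergeometric) manipulation of the two product families $\tfrac{(n+i-2)!}{2^{i-1}(n-i)!}$ and $\prod_{j=h}^{n-1}\tfrac{(1-j)(j+2)}{2}$. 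The positivity statement $(-1)^nc_i^n>0$ then reduces, granted the formula, to the positivity of $\mathrm{denom}$: the numerator is positive because $(j-1)(j-2)\le(n-1)(n-2)<n(n-1)$ for $2\le j\le n$, while $\mathrm{denom}=\sum_{h=2}^n(-1)^{n-h}P_h$ with $P_h>0$ equals $(-1)^n/c_1^n$ by the $\mu_0$-computation; the numerics even suggest $\mathrm{denom}=2\,(2n-3)!!$, and proving that evaluation (or its sign directly) completes the positivity part.

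The crux --- and the reason the statement is still only a conjecture --- is the step that extracts the $\partial^j$-equations in closed form and then checks that the conjectural numerators satisfy all of them at once. Already the symbol computation of $\rho_1^{n-i}\rho_i(\partial^j)$ produces nested sums over compositions of $n$, and the resulting identities couple the ``increasing'' constants $\tfrac{(1-j)(j+2)}{2}$ with the conjectural ``decreasing'' ones $\tfrac{n(n-1)-(j-1)(j-2)}{2}$ in a way for which no clean telescoping has so far emerged. A promising route is to first identify the precise quotient of the enveloping algebra of the positive Witt algebra through which $\Phi^{n+1}$ factors on symbols --- equivalently, the module relations that make the standard form of Theorem~\ref{thm.standardform} well defined --- and to decompose that module under its natural grading, after which the factored quantities $(n+1-j)(n+j-2)$ should acquire a representation-theoretic meaning rendering the final identity transparent.
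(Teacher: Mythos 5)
The statement you are addressing is not proved in the paper at all: the authors explicitly present it as a Conjecture, state that they ``are not able to prove it at the moment in full generality'', and report only a numerical verification for $n\le 12$. Your proposal, by your own admission in the final two paragraphs, also does not prove it, so there is no complete argument to certify. What you do establish correctly is useful but insufficient: the reduction via the uniqueness clause of Theorem~\ref{thm.standardform}, the computation $\rho_i(\mu_0)=i\mu_i$ and $\rho_1(\mu_j)=\tfrac{(1-j)(j+2)}{2}\mu_{j+1}$ from~\eqref{equ.bracketmuenne}, the observation that the normalization equation coming from $\Phi^{n+1}_{\operatorname{Id}}=(-1)^n\mu_n$ is satisfied tautologically by the conjectural formula (the $i=1$ term dies on the factor $(1-1)$ and the rest reproduces the denominator), and the simplification $n(n-1)-(j-1)(j-2)=(n+1-j)(n+j-2)$, whence the numerator equals $(n+i-2)!/(2^{i-1}(n-i)!)$. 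All of this checks out against Table~\ref{tab.coefficienti} for small $n$.

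The genuine gap is the entire remaining content of the conjecture: you never verify that the candidate coefficients satisfy the other $n-1$ (or more) equations that pin them down, i.e., that $\big(c_1^n\rho_1^n+\cdots+c_n^n\rho_n\big)\Phi^1$ equals $\Phi^{n+1}$ in the $(n{+}1)$-dimensional space $V^{n+1}$ of Theorem~\ref{thm.basisforPhienne}, rather than merely having the right $\mu_0$-evaluation. Two specific problems remain. First, your proposed test operators $\partial^j$ on $\Q[x]$ give conditions $\Phi^{n+1}_{\partial^j}=0$ that are vectors in $D_n(\Q[x])$, not scalars; you would have to decompose them in the basis $\Phi^{n+1,i}$ and show the resulting linear system has rank $n$ together with the $\mu_0$-equation --- this is asserted by analogy with the paper's remark about ``$n+1$ linear equations'' but not established. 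Second, even granting a well-posed system, the substitution of the closed form into those equations is exactly the hypergeometric identity you concede has ``no clean telescoping''; and the positivity claim $(-1)^nc_i^n>0$ rests on the further unproved evaluation (or at least the sign) of the denominator, which you only conjecture numerically to be $2(2n-3)!!$. Until those identities are proved, the proposal is a plausible research programme consistent with the paper's own evidence, not a proof.
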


\begin{table}\caption{Values of $x^n_i=(-1)^{n}n! c^n_i$ for $n\le 10$, see Theorem~\ref{thm.standardform}.}\label{tab.coefficienti}
\renewcommand\arraystretch{1.2}
$$ \begin{array}{c|cccccccccc}
n&2&3&4&5&6&7&8&9&10\\
\hline
\vphantom{\begin{matrix}1\\ 1\end{matrix}}x_1^n
&1&1&\dfrac{4}{5}&\dfrac{4}{9}&\dfrac{4}{21}&\dfrac{1}{15}&\dfrac{8}{405}&\dfrac{8}{1575}&\dfrac{4}{3465}\\
\vphantom{\begin{matrix}1\\ 1\end{matrix}}
x_2^n&1&3&\dfrac{24}{5}&\dfrac{40}{9}&\dfrac{20}{7}&\dfrac{7}{5}&\dfrac{224}{405}&\dfrac{32}{175}&\dfrac{4}{77}\\
\vphantom{\begin{matrix}1\\ 1\end{matrix}}
x_3^n&&6&24&40&40&28&\dfrac{224}{15}&\dfrac{32}{5}&\dfrac{16}{7}\\
\vphantom{\begin{matrix}1\\ 1\end{matrix}}
x_4^n&&&72&280&480&504&\dfrac{1120}{3}&\dfrac{1056}{5}&96\\
\vphantom{\begin{matrix}1\\ 1\end{matrix}}
x_5^n&&&&1120&4320&7560&\dfrac{24640}{3}&6336&3744\\
x_6^n&&&&&21600&83160&147840&164736&131040\\
x_7^n&&&&&&498960&1921920&3459456&3931200\\
x_8^n&&&&&&&13453440&51891840&94348800\\
x_9^n&&&&&&&&415134720&1603929600\\
x_{10}^n&&&&&&&&&14435366400
\end{array}$$
\renewcommand\arraystretch{1}
\end{table}

\section{The Nijenhuis--Richardson bracket}\label{sec.NR}

Unless otherwise specif\/ied every (super)vector space, symmetric product, linear map etc.\ are considered over the base f\/ield $\Q$ of rational numbers, although everything works in the same way over any f\/ield of characteristic~0.
Given a super vector space $V=V^{0}\oplus V^{1}$, we denote by~$V^{\odot n}$ its symmetric powers and by
$D_{n}(V)=\Hom^*(V^{\odot n+1},V)$ the super vector space of multilinear supersymmetric maps on $n+1$ variables
\begin{gather*} f\colon \ \underbrace{V\times\cdots\times V}_{n+1}\to V. \end{gather*}
Recall that supersymmetry means that
\begin{gather*} f(v_0,\ldots,v_i,v_{i+1},\ldots,v_n)=(-1)^{|v_i||v_{i+1}|}f(v_0,\ldots,v_{i+1},v_{i},\ldots,v_n),\end{gather*}
where $|v|=0,1$ denotes the parity of a homogeneous element $v$.

Given $f\in D_n(V)$ and $g\in D_m(V)$, $n,m\ge 0$, their Nijenhuis--Richardson bracket \cite{NijRich67} is def\/ined as
\begin{gather*} [f,g]=f\wedgebar g-(-1)^{|f||g|}g\wedgebar f \in D_{n+m}(V),\end{gather*}
where
\begin{gather*} f\wedgebar g
(v_{0},\ldots, v_{n+m})= \sum_{\sigma\in S(m+1,n)}\epsilon(\sigma)
f\big(g(v_{\sigma(0)},\ldots, v_{\sigma(m)}), v_{\sigma(m+1)},\ldots,
v_{\sigma(m+n)}\big),\end{gather*}
and $S(m+1,n)$ is the set of shuf\/f\/les of type $(m+1,n)$, i.e., the set of permutations $\sigma$ of $0,\ldots,n+m$
such that $\sigma(0)<\cdots<\sigma(m)$ and $\sigma(m+1)<\cdots<\sigma(m+n)$. The Koszul sign~$\epsilon(\sigma)$ is equal to $(-1)^{\alpha}$, where $\alpha$ is the number of pairs $(i,j)$ such that $i<j$, $\sigma(i)>\sigma(j)$ and $|v_i||v_j|=1$.

Observe that when $f\in D_0(V)$ the product $f\wedgebar g$ is the same as the composition product $f\circ g$ and therefore
the Nijenhuis--Richardson bracket reduces to the usual super commutator on~$D_0(V)$. We denote by $D(V)=\prod\limits_{n\ge 0}D_n(V)$; the Nijenhuis--Richardson brackets induces on~$D(V)$ a structure of Lie superalgebra:
\begin{gather*} \left[\sum_{i=0}^{\infty}f_i ,\sum_{j= 0}^{\infty}g_j\right]=\sum_{n= 0}^{\infty} \sum_{i=0}^n [f_i,g_{n-i}],\qquad f_i,g_i\in D_i(V) .\end{gather*}

\begin{Remark}\label{rem.coderivationNR}
Denoting by $\overline{S^c}(V)=\oplus_{n\ge 1} V^{\odot n}$ the reduced symmetric coalgebra generated by~$V$,
the composition with the natural projection $p\colon \overline{S^c}(V)\to V$ gives a morphism of super vector spaces
\begin{gather*} P\colon \ \Hom^*\big(\overline{S^c}(V),\overline{S^c}(V)\big)\to \Hom^*\big(\overline{S^c}(V),V\big)=D(V),\end{gather*}
and it is well known \cite{LadaMarkl} that $P$ induces an isomorphism of Lie superalgebras
\begin{gather*} P\colon \ \Coder^*\big(\overline{S^c}(V)\big)\xrightarrow{\simeq} D(V),\end{gather*}
where the bracket on $\Coder^*(\overline{S^c}(V))$ is the super commutator.
\end{Remark}

Assume now that $A$ is a commutative superalgebra, then there exists a distinguished sequence $\mu_n\in D_n(A)$, $n\ge 0$, corresponding to multiplication in~$A$:
\begin{gather*} \mu_n(a_0,\ldots,a_n)=a_0a_1\cdots a_n .\end{gather*}

We have
\begin{gather}\label{equ.bracketmuenne}
\mu_n\wedgebar \mu_m=\dbinom{n+m+1}{m+1}\mu_{n+m},\qquad
[\mu_n,\mu_m]=(n-m)\dfrac{(n+m+1)!}{(n+1)!(m+1)!}
\mu_{n+m} .\end{gather}
In fact, the binomial coef\/f\/icient in the f\/irst formula is equal to the cardinality of the set of shuf\/f\/les involved in the formula for the product $\wedgebar$.

\section{Koszul numbers}

Let's recall the notion of iterative exponential and iterative logarithm~\cite{ecalle}. Given a formal power series $a(t)\in\Q[[t]]$ with rational coef\/f\/icients such that $a(0)=a'(0)=0$, the following derivation is well def\/ined
\begin{gather*}a(t)\dfrac{d~}{dt}\colon \ \Q[[t]]\to \Q[[t]] ,\end{gather*}
as well as its exponential
\begin{gather*} \exp\left(a(t)\dfrac{d~}{dt}\right)\colon\ \Q[[t]]\to \Q[[t]] ,\end{gather*}
which is an isomorphism of rings. The power series
\begin{gather*}
\operatorname{itexp}(a(t))=\exp\left(a(t)\dfrac{d~}{dt}\right)(t)=t+a(t)+\frac{1}{2}a(t)a'(t)
+\frac{1}{6}(a(t)^2a''(t)+a(t)a'(t)^2)+\cdots
\end{gather*}
is called the \emph{iterative exponential} of $a(t)$. Conversely, for every $g(t)\in \Q[[t]]$ such that $g(0)=0$ and $g'(0)=1$ there exists a unique formal power series $a(t)=\operatorname{itlog}(g(t))$, called the \emph{iterative logarithm}, such that
\begin{gather*}
a(0)=a'(0)=0,\qquad g(t)=\operatorname{itexp}(a(t)) .
\end{gather*}

In this paper we are interested to a special sequence of rational numbers $K_i\in \Q$ which will appear in the natural description of the inf\/initesimal generator of the Koszul hierarchy: with a~certain lack of imagination we shall refer to this sequence as ``Koszul numbers''.

\begin{Definition}\label{def.koszulnumbers}
By \emph{Koszul numbers} we mean the sequence of rational numbers $K_n$, $n\ge 1$, determined by the formula
\begin{gather*}\sum_{n=1}^{\infty} K_{n}\frac{t^{n+1}}{ (n+1)! }=\operatorname{itlog}\big(e^t-1\big) ,\end{gather*}
or equivalently by the equation
\begin{gather*} \exp\left(\sum_{n=1}^{\infty}K_n\dfrac{t^{n+1}}{ (n+1)! }\dfrac{d~}{ dt }\right)(t)=e^t-1 .\end{gather*}
\end{Definition}

\begin{Remark}\label{rem.proofA9}
As already mentioned in the introduction, the f\/irst 12 Koszul numbers
\begin{gather*} K_1=1, \quad K_2=-\frac{1}{2}, \quad K_3=\frac{1}{2}, \quad K_4=-\frac{2}{3},\quad
K_5=\frac{11}{12}, \quad K_6=-\frac{3}{4}, \quad K_7=-\frac{11}{6},\\
K_8=\frac{29}{4}, \quad K_9=\frac{493}{12}, \quad K_{10}=-\frac{2711}{6}, \quad K_{11}=-\frac{12406}{15}, \quad K_{12}=\frac{2636317}{60},\quad \ldots\end{gather*}
appear in certain computations of Hurwitz numbers by Shadrin and Zvonkine~\cite{SZ}, in particular in Conjecture~A.9 of~\cite{SZ}, cf.\ also~\cite{sloane}. This conjecture was subsequently proved by Aschenbrenner~\cite{Asche}, who also f\/ixed a couple of misprints in the original formula of Shadrin and Zvonkine. Here we brief\/ly recall the conjecture together with the translation of the Aschenbrenner's proof in our setup, which avoids the use of inf\/inite matrices and combinatorial properties of the Stirling numbers.

The starting point is the following sequence of formal power series $a_d(z)\in \Q[[z]]$:
\begin{gather*} a_d(z)=\sum_{b=0}^{d}\frac{(-1)^{d-b}}{d!}\binom{d}{b}\frac{1}{1-(b+1)z}=
\sum_{n\ge 0}\sum_{b=0}^{d}\frac{(-1)^{d-b}}{d!}\binom{d}{b}(b+1)^nz^n,\qquad d\ge 0 .\end{gather*}
In order to show that the multiplicity of $a_d(z)$ is $d$ (cf.~\cite[Proposition~2.1]{SZ}), it is convenient to consider the isomorphism of $\Q$-vector spaces
\begin{gather*} \psi\colon \ \Q[[z]]\to t\Q[[t]],\qquad \psi\left(\sum c_nz^n\right)=\sum c_n \frac{t^{n+1}}{ (n+1)! },\end{gather*}
and prove that for every $d$ we have
\begin{gather}\label{equ.psiadi}
\psi(a_d(z))=\frac{(e^t-1)^{d+1}}{(d+1)!} .\end{gather}
In fact
\begin{gather*} \psi(a_d(z))=\sum_{n\ge 0}\sum_{b=0}^{d}\frac{(-1)^{d-b}}{d!}\binom{d}{b}(b+1)^n\frac{t^{n+1}}{(n+1)!}=
\sum_{b=0}^{d}\frac{(-1)^{d-b}}{d!}\binom{d}{b}\frac{e^{(b+1)t}-1}{b+1}, \end{gather*}
and therefore
\begin{gather*} \frac{d\psi(a_d(z))}{dt}=
\sum_{b=0}^{d}\frac{(-1)^{d-b}}{d!}\binom{d}{b}e^{(b+1)t}=\frac{e^t(e^t-1)^d}{d!}=
\frac{d~}{dt}\frac{(e^t-1)^{d+1}}{(d+1)!} .\end{gather*}
Thus we can def\/ine the rational numbers $a_{d,d+k}$, for $d,k\ge 0$, by setting
\begin{gather*} a_d(z)=\sum_{k\ge 0} a_{d,d+k}z^{d+k}=z^d+\sum_{k>0} a_{d,d+k}z^{d+k} .\end{gather*}
Consider now the ring $R=\Q[[t_0,t_1,\ldots]]$ of formal power series in the pairwise distinct indeterminates $t_n$, $n\in \N$. That ring
has the complete decreasing f\/iltration
\begin{gather*} F^pR=\left\{ \sum_{p\le i_0+2i_1+3i_2+\cdots<+\infty} a_{i_0,i_1,i_2,\ldots} t_0^{i_0}t_1^{i_1}t_2^{i_2}\cdots\right\},\end{gather*}
and therefore there exists a (unique) morphism of $\Q$-algebras $L\colon R\to R$ such that
\begin{gather*} L(t_d)=\sum_{k\ge 0}a_{d,d+k}t_{d+k}=t_d+\sum_{k>0}a_{d,d+k}t_{d+k}\end{gather*}
for every $d\ge 0$. Then the Conjecture~A.9 of \cite{SZ} is expressed by the equality:
\begin{gather}\label{equ.congA9}
L=\exp\left(\sum_{n>0}K_n\sum_{k\ge 0}\binom{k+n+1}{n+1}t_{n+k}\frac{\partial~}{\partial t_k}\right) .\end{gather}
In order to prove \eqref{equ.congA9} notice that the vector subspace $T\subset R$ of power series of type $\sum\limits_{n=0}^{\infty} a_nt_n$ is isomorphic to the maximal ideal of $\Q[[t]]$ via the continuous linear isomorphism of complete $\Q$-vector spaces
\begin{gather*} \phi\colon \ T\to t\Q[[t]],\qquad \phi(t_n)=\psi(z^n)=\frac{t^{n+1}}{(n+1)!} .\end{gather*}
For every $n\ge 0$ the operator $\phi^{-1}\circ \frac{t^{n+1}}{(n+1)!}\frac{d~}{dt}\circ \phi\colon T\to T$
is the restriction to $T$ of the derivation
\begin{gather*} r_n\colon \ R\to R,\qquad r_n=\sum_{k\ge 0} \binom{k+n+1}{n+1}t_{k+n}\frac{\partial~}{\partial t_k} .\end{gather*}
Therefore, by~\eqref{equ.psiadi} and the def\/inition of Koszul numbers, for every $d\ge 0$ we have
\begin{gather*}
\begin{split}
&\phi\left(\exp\left(\sum_{n>0}K_n\sum_{k\ge 0}\binom{k+n+1}{n+1}t_{n+k}
\frac{\partial~}{\partial t_k}\right)t_d\right) =
\exp\left(\sum_{n=1}^{\infty}K_n\dfrac{t^{n+1}}{ (n+1)! }\dfrac{d~}{ dt }\right)\frac{t^{d+1}}{(d+1)!}\\
& \hphantom{\phi\left(\exp\left(\sum_{n>0}K_n\sum_{k\ge 0}\binom{k+n+1}{n+1}t_{n+k}
\frac{\partial~}{\partial t_k}\right)t_d\right)}{}
 =\frac{(e^t-1)^{d+1}}{(d+1)!}=
\psi(a_d(z))=\phi(L(t_d)),
\end{split}
\end{gather*}
and this implies formula~\eqref{equ.congA9}.
\end{Remark}

\begin{Remark}\label{rem.magnus} Koszul numbers, and more generally iterative logarithms, may also be computed by using the pre-Lie Magnus expansion (see~\cite{pLDF} and references therein):
\begin{gather*} \sum_{n=1}^{\infty}K_n\dfrac{t^{n+1}}{ (n+1)! }\dfrac{d~}{ dt }=
\Omega\left(\sum_{n=1}^{\infty}\dfrac{t^{n+1}}{ (n+1)! }\dfrac{d~}{ dt }\right)\end{gather*}
in the pre-Lie algebra of formal vector f\/ields over the line.
\end{Remark}

\begin{Lemma}[Julia's equation]\label{lem.lemmachiave}
Given $a(t)\in \Q[[t]]$ such that $a(0)=a'(0)=0$ and denoting $g(t)=\operatorname{itexp}(a(t))$, we have $a(g(t))=a(t)g'(t)$.
\end{Lemma}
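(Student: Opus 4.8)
The statement to prove is \emph{Julia's equation}: if $a(0)=a'(0)=0$ and $g=\operatorname{itexp}(a)$, then $a(g(t))=a(t)g'(t)$.

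\textbf{Proof plan.} The plan is to exploit the definition of the iterative exponential as the flow of the vector field $X=a(t)\,\frac{d}{dt}$. Write $E=\exp(X)\colon\Q[[t]]\to\Q[[t]]$, so that $g(t)=E(t)$. The key observation is that $E$ is a ring homomorphism commuting with $X$ (since $E$ is literally a power series in $X$), and moreover that $E$ intertwines the identity series with $g$: for any $h(t)\in\Q[[t]]$ one has $E(h(t))=h(g(t))$, because $E$ is a continuous ring homomorphism sending $t\mapsto g(t)$. First I would record this substitution property carefully: $E\bigl(h(t)\bigr)=h\bigl(E(t)\bigr)=h(g(t))$ for all $h$, which holds at the level of polynomials by the homomorphism property and extends to all of $\Q[[t]]$ by the $t$-adic continuity of $E$ (note $X$ raises the order by at least one since $a(0)=a'(0)=0$, so $\exp(X)$ converges $t$-adically and is continuous).

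Next I would compute $X$ applied to the identity element $t$ and compare. On one hand, $X(t)=a(t)$. Applying $E$ and using that $E$ commutes with $X$:
\begin{gather*}
E\bigl(X(t)\bigr)=E\bigl(a(t)\bigr)=a\bigl(g(t)\bigr),
\end{gather*}
where the last equality is the substitution property above. On the other hand, $E$ commuting with $X$ gives $E(X(t))=X(E(t))=X(g(t))=a(t)\,g'(t)$, since $X(g(t))=a(t)\,\frac{d}{dt}g(t)=a(t)g'(t)$. Comparing the two evaluations of $E(X(t))$ yields exactly $a(g(t))=a(t)g'(t)$, which is the claim.

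\textbf{Anticipated obstacle.} The only real subtlety is the justification of the substitution identity $E(h(t))=h(g(t))$ and of the commutation $E\circ X=X\circ E$ at the level of formal power series — i.e., that the infinite sum $\exp(X)=\sum_{k\ge 0}X^k/k!$ genuinely defines a ring endomorphism of $\Q[[t]]$ and behaves well under composition. This is a matter of checking $t$-adic convergence: because $a$ vanishes to order $2$, the operator $X$ strictly increases the valuation, so on each quotient $\Q[t]/(t^{N})$ only finitely many terms of $\exp(X)$ act nontrivially, and one gets a well-defined automorphism in the limit; being an exponential of a derivation, it is automatically a ring homomorphism (Leibniz applied repeatedly), and it commutes with $X$ termwise. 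Once this bookkeeping is in place the rest is a one-line computation, so I would spend the bulk of the writeup on making the continuity/homomorphism claims precise and then deduce Julia's equation as above.
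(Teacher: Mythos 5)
Your proposal is correct and follows essentially the same route as the paper's proof: both evaluate $\exp\bigl(a(t)\tfrac{d}{dt}\bigr)$ on $a(t)=X(t)$ in two ways, using that the exponential of the derivation is a continuous ring homomorphism sending $h(t)\mapsto h(g(t))$ and commutes with $X$. The only difference is that you elaborate on the $t$-adic convergence bookkeeping that the paper compresses into the phrase ``local isomorphism of complete local rings.''
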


\begin{proof} The exponential $\exp\left(a(t)\frac{d~}{dt}\right)$ is a local isomorphism of complete local rings, therefore
\begin{gather*} \exp\left(a(t)\dfrac{d~}{dt}\right)(t^n)=g(t)^n\end{gather*}
for every $n$ and then
\begin{gather*} \exp\left(a(t)\dfrac{d~}{dt}\right)(f(t))=f(g(t))\end{gather*}
for every $f(t)\in \Q[[t]]$. In particular, we have
\begin{gather*}
a(g(t)) =\exp\left(a(t)\dfrac{d~}{dt}\right)(a(t))
=\exp\left(a(t)\dfrac{d~}{dt}\right)\left(a(t)\dfrac{d~}{dt}\right)(t)\\
\hphantom{a(g(t))}{} =\left(a(t)\dfrac{d~}{dt}\right)\exp\left(a(t)\dfrac{d~}{dt}\right)(t)
=\left(a(t)\dfrac{d~}{dt}\right)(g(t))\\
\hphantom{a(g(t))}{} =a(t)g'(t) . \tag*{\qed} \end{gather*}
\renewcommand{\qed}{}
\end{proof}

Julia's equation shows a clear relation between Koszul numbers and the sequence~A180609~\cite{hanna} in the On-Line Encyclopedia of Integer Sequences, def\/ined by the equation
\begin{gather*} A(t)=A(e^t-1)\frac{1-e^{-t}}{t},\qquad A(t)=\sum_{n=1}^{\infty} \frac{a_n t^n}{ n! (n+1)! },\qquad
a_1=1 .\end{gather*}

\begin{table}\caption{The f\/irst 16 terms of the sequence OEIS-A180609, $a_n=n! K_n$.}\label{tab.tabellakoszul}
$$\begin{array}{ll}
a_1=1&a_9=14908320\\
a_2=-1&a_{10}=-1639612800\\
a_3=3&a_{11}=-33013854720\\
a_4=-16&a_{12}=21046667685120\\
a_5=110&a_{13}=-549927873855360\\
a_6=-540&a_{14}=-637881314775344640\\
a_7=-9240&a_{15}=76198391578224115200\\
a_8=292320\qquad&a_{16}=41404329870413936025600
\end{array}$$
\end{table}

It is immediate to see that $K_n=\frac{a_n}{n!}$ for every $n>0$. In fact, setting $a(t)=tA(t)$ the equation $A(t)=A(e^t-1)\frac{1-e^{-t}}{t}$ becomes $a(e^t-1)=a(t)e^t$. In particular, the sequences~A134242 and~A134243~\cite{sloane} are respectively the numerators and the denominators of the sequence $a_n/n!$.

\begin{Lemma}[Aschenbrenner]\label{lem.aschelemma}
For every $n\ge 2$ we have
\begin{gather*}
K_n =\frac{-2}{(n+2)(n-1)}\sum_{i=1}^{n-1} \stirl{n+1}{i}K_i ,\\
K_{n} =\sum_{k=1}^n\frac{(-1)^{k+1}}{k}\sum_{1<n_1<\cdots<n_{k-1}<n_k=n+1}
\stirl{n_2}{n_1} \stirl{n_3}{n_2} \cdots \stirl{n_k}{n_{k-1}} ,
\end{gather*}
where $\stirl{n}{i}$ are the Stirling numbers of the second kind. In particular, $n!K_n$ is an integer for every~$n$.
\end{Lemma}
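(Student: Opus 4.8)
The plan is to extract both identities from the single functional equation that \emph{defines} the Koszul numbers, namely $\operatorname{itexp}(a(t))=e^t-1$ with $a(t)=\sum_{n\ge1}K_n\frac{t^{n+1}}{(n+1)!}\in t^2\Q[[t]]$: the first identity will come from Julia's equation (Lemma~\ref{lem.lemmachiave}), and the second from the formal logarithm of the substitution operator $\Sigma\colon\Q[[t]]\to\Q[[t]]$, $\Sigma(f)(t)=f(e^t-1)$. By the ring\-/homomorphism argument in the proof of Lemma~\ref{lem.lemmachiave} (which gives $\exp(a(t)\frac{d}{dt})(f(t))=f(\operatorname{itexp}(a(t)))$) one has $\Sigma=\exp(a(t)\frac{d}{dt})$ as operators on $\Q[[t]]$. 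Since both $\Sigma-I$ and $a(t)\frac{d}{dt}$ strictly raise the $t$-adic order, the series $\log\Sigma=\sum_{k\ge1}\frac{(-1)^{k+1}}{k}(\Sigma-I)^k$ converges coefficientwise and, by injectivity of $\exp$ on such operators, $\log\Sigma=a(t)\frac{d}{dt}$. I would also record the Stirling generating function $\frac{(e^t-1)^i}{i!}=\sum_{m\ge i}\stirl{m}{i}\frac{t^m}{m!}$ (essentially \eqref{equ.psiadi}), which shows $(\Sigma-I)\big(\frac{t^i}{i!}\big)=\sum_{m>i}\stirl{m}{i}\frac{t^m}{m!}$.

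\emph{First formula.} Julia's equation with $g(t)=e^t-1$ reads $a(e^t-1)=a(t)e^t$. Differentiating in $t$ and dividing by the unit $e^t$ gives $a'(e^t-1)=a'(t)+a(t)$, where $a'$ is the derivative of the one-variable series $a$, so $a'(u)=\sum_{i\ge1}K_i\frac{u^i}{i!}$. Then the left side equals $\sum_{m\ge1}\big(\sum_{i=1}^{m}\stirl{m}{i}K_i\big)\frac{t^m}{m!}$, while the right side has $\frac{t^m}{m!}$-coefficient $K_m+K_{m-1}$ for $m\ge2$. Comparing coefficients, cancelling the diagonal term $\stirl{m}{m}K_m=K_m$, and writing $m=n+1$ yields $\sum_{i=1}^{n}\stirl{n+1}{i}K_i=K_n$. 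Isolating the last summand $\stirl{n+1}{n}K_n=\binom{n+1}{2}K_n$ and using $\binom{n+1}{2}-1=\frac{(n+2)(n-1)}{2}$ gives the first stated formula.

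\emph{Second formula and integrality.} Here $K_n$ is the coefficient of $\frac{t^{n+1}}{(n+1)!}$ in $a(t)=\big(a(t)\frac{d}{dt}\big)(t)=(\log\Sigma)(t)$. Expanding $(\log\Sigma)(t)=\sum_{k\ge1}\frac{(-1)^{k+1}}{k}(\Sigma-I)^k(t)$ and iterating the formula for $\Sigma-I$ in the basis $\{t^m/m!\}$, one gets $(\Sigma-I)^k(t)=\sum\stirl{m_1}{1}\stirl{m_2}{m_1}\cdots\stirl{m_k}{m_{k-1}}\frac{t^{m_k}}{m_k!}$, summed over all chains $1<m_1<\cdots<m_k$. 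Since $\stirl{m_1}{1}=1$, the coefficient of $\frac{t^{n+1}}{(n+1)!}$ (the terms with $m_k=n+1$) is, after renaming $m_j=n_j$, exactly $\sum_{1<n_1<\cdots<n_{k-1}<n_k=n+1}\stirl{n_2}{n_1}\stirl{n_3}{n_2}\cdots\stirl{n_k}{n_{k-1}}$, with empty product $=1$ when $k=1$. The chain condition forces $k\le n$, so the sum over $k$ is finite and this is the second formula. Integrality of $n!K_n$ is then immediate: writing $c_{n,k}\in\N$ for the chain sum, $n!K_n=\sum_{k=1}^{n}(-1)^{k+1}\frac{n!}{k}c_{n,k}$, and $\frac{n!}{k}\in\Z$ because $1\le k\le n$.

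I expect no serious obstacle here: the computations are routine and the two points needing care are the legitimation of the operator identity $\log\Sigma=a(t)\frac{d}{dt}$ (all operators involved strictly raise the $t$-adic order, so this is standard) and the bookkeeping required to match $(\Sigma-I)^k(t)$ with the chain sum appearing in the statement, in particular the role of the normalizations $t^m/m!$ and of $\stirl{m_1}{1}=1$.
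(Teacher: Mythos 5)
Your proof is correct. For the first identity you follow essentially the same route as the paper: Julia's equation $a(e^t-1)=a(t)e^t$, differentiation and division by $e^t$, expansion of $a'(e^t-1)$ via the Stirling generating function $\frac{(e^t-1)^i}{i!}=\sum_{m\ge i}\stirl{m}{i}\frac{t^m}{m!}$, and comparison of the coefficients of $t^m/m!$; the paper runs the identical computation through the iterated chain-rule formula $g^{(n)}(t)=\sum_k\stirl{n}{k}f^{(k)}(e^t-1)e^{kt}$ evaluated at $t=0$, which is coefficient extraction in disguise, so the two derivations of $K_{n}=\frac{-2}{(n+2)(n-1)}\sum_{i=1}^{n-1}\stirl{n+1}{i}K_i$ are the same argument in different notation. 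The genuine difference is in scope: the paper proves \emph{only} the first identity and defers the second identity and the integrality of $n!K_n$ to Aschenbrenner's paper, whereas you give complete proofs of both. Your identification $\log\Sigma=a(t)\frac{d}{dt}$ for the substitution operator $\Sigma(f)=f(e^t-1)$, followed by the expansion of $(\Sigma-I)^k(t)$ in the basis $\{t^m/m!\}$ and the observation $\stirl{m_1}{1}=1$, is a clean power-series translation of Aschenbrenner's ``logarithm of the iteration matrix'' argument --- the same translation the paper carries out in Remark~\ref{rem.proofA9} for Conjecture~A.9, but there only in the exponential direction. The operator-logarithm step is legitimate for exactly the reason you state (both $\Sigma-I$ and $a(t)\frac{d}{dt}$ strictly raise the $t$-adic order, so $\exp$ and $\log$ are mutually inverse coefficientwise), the chain condition correctly truncates the sum at $k\le n$, and the integrality deduction from $n!/k\in\Z$ for $1\le k\le n$ is right. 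In short: where your proof overlaps with the paper's it coincides with it, and where the paper cites the literature you have supplied a correct self-contained argument.
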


\begin{proof} This is proved in \cite[Section~7]{Asche} by using the theory of iterative logarithms. For reader's convenience we reproduce here the proof of the f\/irst equality as a consequence of Julia's equation.
Recall that the Stirling numbers of the second kind $\stirl{n}{i}$, $1\le i\le n$, may be def\/ined by the formulas
\begin{gather*}\stirl{n}{1}=\stirl{n}{n}=1,\qquad
\stirl{n}{i}=\stirl{n-1}{i-1}+i\stirl{n-1}{i},\qquad 1<i<n .\end{gather*}
In particular, for every $n\ge 2$ we have
\begin{gather*}
\stirl{n}{2}=2^{n-1}-1,\qquad \stirl{n}{n-1}=\binom{n}{2}.\end{gather*}
Let $f(t)\in \Q[[t]]$ be any formal power series and denote $g(t)=f(e^t-1)$. Then for every $n>0$ the $n$th derivative of $g$ is equal to
\begin{gather}\label{equ.equ.stirlingandderivata}
g^{(n)}(t)=\sum_{k=1}^n \stirl{n}{k} f^{(k)}\big(e^t-1\big)e^{kt} .
\end{gather}

The proof is done by induction on $n$. For $n=1$ formula~\eqref{equ.equ.stirlingandderivata}
is precisely the formula of derivation of composite functions. For general~$n$ we have
\begin{gather*}
g^{(n+1)}(t) =\sum_{k} \stirl{n}{k}\big(f^{(k+1)}\big(e^t-1\big)e^{(k+1)t}+if^{(k)}\big(e^t-1\big)e^{kt}\big),\\
g^{(n+1)}(t) =\sum_{k} \left(\stirl{n}{k-1}+k\stirl{n}{k}\right)f^{(k)}\big(e^t-1\big)e^{kt}.
 \end{gather*}
Notice that \eqref{equ.equ.stirlingandderivata} applied to
$f(t)=t^k$ gives
\begin{gather*}
g^{(n)}(0)=\stirl{n}{k}k! ,\quad
\frac{(e^t-1)^k}{k!}=\sum_{n\ge 0}\stirl{n}{k}\frac{t^n}{n!}=
\sum_{m\ge 0}\stirl{m+k}{k}\frac{t^{m+k}}{(m+k)!} .
\end{gather*}
Denoting $a(t)=K_1\frac{t^2}{2!}+K_2\frac{t^3}{3!}+\cdots$, according to
Def\/inition~\ref{def.koszulnumbers} and
Lemma~\ref{lem.lemmachiave} we have
$a(e^t-1)=a(t)e^{t}$;
taking the derivative we get
\begin{gather*} a'\big(e^t-1\big)e^t=a'(t)e^t+a(t)e^t,\qquad a(t)+a'(t)=a'\big(e^t-1\big),\end{gather*}
and then, by formula~\eqref{equ.equ.stirlingandderivata},
\begin{gather*} a^{(h)}(t)+a^{(h+1)}(t)=\sum_{i=1}^h \stirl{h}{i} a^{(i+1)}\big(e^t-1\big)e^{it} .\end{gather*}
Evaluating the above expression at $t=0$ we get
\begin{gather*} K_{h-1}+K_h=\sum_{i=1}^h \stirl{h}{i}K_i=K_h+\binom{h}{2}K_{h-1}+
\sum_{i=1}^{h-2} \stirl{h}{i}K_i, \\
 K_{h-1}=\frac{1}{1-\binom{h}{2}}\sum_{i=1}^{h-2} \stirl{h}{i}K_i=
\frac{-2}{(h+1)(h-2)}\sum_{i=1}^{h-2} \stirl{h}{i}K_i .\tag*{\qed}
\end{gather*}
\renewcommand{\qed}{}
\end{proof}

The symmetric coalgebra generated by the indeterminate $x$ over the f\/ield $\Q$ is the vector space $\Q[x]$ of polynomials with rational coef\/f\/icients, equipped with the coproduct
\begin{gather*} \Delta\colon \ \Q[x]\to \Q[x]\otimes_{\Q} \Q[x],\qquad \Delta(x^m)=\sum_{k=0}^{m}
\binom{m}{k}x^k\otimes x^{m-k}.\end{gather*}

\begin{Lemma}
Let $\de\colon \Q[x]\to \Q[x]$ be the usual derivation operator. Then the operators
\begin{gather*}d_n=\dfrac{x\partial^{n+1}}{ (n+1)! },\qquad n\ge -1,\end{gather*}
are coderivations in the coalgebra
$(\Q[x],\Delta)$ such that
\begin{gather}\label{equ.bracketdenne}
[d_n,d_m]=(n-m)\dfrac{(n+m+1)!}{(n+1)!(m+1)!}
d_{n+m} \end{gather}
for every $n,m\ge -1$.
\end{Lemma}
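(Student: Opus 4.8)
The plan is to establish the two assertions separately: first that each $d_n$ is a coderivation of $(\Q[x],\Delta)$, and then the bracket relation \eqref{equ.bracketdenne}. For the first part, recall that a linear map $D\colon \Q[x]\to\Q[x]$ is a coderivation precisely when $\Delta\circ D=(D\otimes\mathrm{id}+\mathrm{id}\otimes D)\circ\Delta$. Since $\Delta$ is an algebra morphism for the standard product (the binomial coproduct is exactly the one dual to polynomial multiplication, equivalently $\Delta(p(x))=p(x\otimes 1+1\otimes x)$ under the substitution $x\mapsto x\otimes1+1\otimes x$), it suffices to check the coderivation property on $x^m$ for all $m$. First I would verify that the pure differential operator $\partial^{k}/k!$ satisfies $\Delta\circ(\partial^{k}/k!)=\sum_{i+j=k}(\partial^{i}/i!)\otimes(\partial^{j}/j!)\circ\Delta$ — this is just the Leibniz-type identity $\binom{m}{k}=\sum_{i+j=k}\binom{a}{i}\binom{b}{j}$ with $a+b$ the relevant exponents, i.e.\ Vandermonde — and then observe that multiplication by $x$ on $\Q[x]$ is, under the coproduct, the operation $\Delta\circ(x\cdot)=(x\otimes1+1\otimes x)\cdot$ composed appropriately; combining these two facts gives that $d_n=x\partial^{n+1}/(n+1)!$ is a coderivation. (Alternatively, and perhaps more cleanly, I would invoke Remark~\ref{rem.coderivationNR}: under the isomorphism $P\colon \Coder^*(\overline{S^c}(V))\to D(V)$ with $V=\Q x$ one-dimensional, the coderivation $d_n$ corresponds to the element of $D(V)$ given by $p(x^{\odot m})\mapsto$ the $x^1$-component, which for $d_n$ lands in $D_n$ and is nothing but a scalar multiple of the one-variable analogue of $\mu_n$; but the direct check above is elementary enough that I would simply do it.)

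For the bracket relation, I would compute $[d_n,d_m]=d_n\circ d_m-d_m\circ d_n$ directly as differential operators on $\Q[x]$, since on a coalgebra the bracket of coderivations is the commutator. Writing everything out, $d_n d_m=\frac{1}{(n+1)!(m+1)!}x\partial^{n+1}\bigl(x\partial^{m+1}\bigr)$, and using $\partial^{n+1}(x q)=x\partial^{n+1}q+(n+1)\partial^{n}q$ one gets $d_nd_m=\frac{1}{(n+1)!(m+1)!}\bigl(x^2\partial^{n+m+2}+(n+1)x\partial^{n+m+1}\bigr)$. Subtracting the same with $n,m$ swapped, the $x^2\partial^{n+m+2}$ terms cancel and one is left with
\begin{gather*}
[d_n,d_m]=\frac{(n+1)-(m+1)}{(n+1)!(m+1)!}\,x\partial^{n+m+1}
=(n-m)\frac{(n+m+1)!}{(n+1)!(m+1)!}\,\frac{x\partial^{n+m+1}}{(n+m+1)!}
=(n-m)\frac{(n+m+1)!}{(n+1)!(m+1)!}d_{n+m},
\end{gather*}
which is exactly the claimed formula; it holds for all $n,m\ge-1$ since the computation never used positivity, and for $n=-1$ one has $d_{-1}=x$ acting as multiplication, consistent with the convention.

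The verification is almost entirely routine; the only genuine content is the coderivation claim, and even there the single nontrivial input is the Vandermonde identity $\sum_{i+j=k}\binom{a}{i}\binom{b}{j}=\binom{a+b}{k}$ packaged correctly with the shift by multiplication-by-$x$. I do not anticipate any real obstacle: the main thing to be careful about is bookkeeping of the factorials and the $n\ge-1$ edge case, and making sure the sign/normalization of the commutator matches \eqref{equ.bracketmuenne} and \eqref{equ.bracketrhoenne} — which it does, since the whole point of introducing the $d_n$ is that they give the same Lie-bracket relations as the $\mu_n$ (equivalently the $\rho_n$), realizing the relevant quotient of the Lie algebra of formal vector fields on the line.
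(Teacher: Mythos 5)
Your computation of the bracket relation is correct and complete: the identity $\partial^{n+1}(xq)=x\partial^{n+1}q+(n+1)\partial^{n}q$ does give $[d_n,d_m]=\frac{n-m}{(n+1)!(m+1)!}\,x\partial^{n+m+1}=(n-m)\frac{(n+m+1)!}{(n+1)!(m+1)!}d_{n+m}$ for all $n,m\ge -1$, and you are more explicit here than the paper, which treats this half as routine.

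The gap is in the coderivation half, at the step you call the ``only genuine content''. The identity you propose to verify first,
\[
\Delta\circ\frac{\partial^{k}}{k!}=\Bigl(\sum_{i+j=k}\frac{\partial^{i}}{i!}\otimes\frac{\partial^{j}}{j!}\Bigr)\circ\Delta ,
\]
is false: for $k=1$ applied to $x$ the left-hand side is $\Delta(1)=1\otimes 1$, while the right-hand side is $(\partial\otimes\operatorname{Id}+\operatorname{Id}\otimes\partial)(x\otimes 1+1\otimes x)=2(1\otimes 1)$. What you have written down is the Leibniz rule for \emph{products}, $\frac{\partial^{k}}{k!}(fg)=\sum_{i+j=k}\frac{\partial^{i}f}{i!}\frac{\partial^{j}g}{j!}$, i.e., a statement about $\frac{\partial^{k}}{k!}$ composed with the multiplication map; its coalgebra counterpart is a different statement. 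Under the pairing $\langle x^{n},t^{n}\rangle=n!$ the transpose of $\frac{\partial^{k}}{k!}$ is multiplication by $\frac{t^{k}}{k!}$, which is not a derivation of $\Q[[t]]$, so the correct relation is the one-sided comodule identity $\Delta\circ\frac{\partial^{k}}{k!}=\bigl(\frac{\partial^{k}}{k!}\otimes\operatorname{Id}\bigr)\circ\Delta=\bigl(\operatorname{Id}\otimes\frac{\partial^{k}}{k!}\bigr)\circ\Delta$. Moreover, even after this correction your ``combining'' step is not free: composing with $\Delta\circ(x\cdot)=(x\otimes 1+1\otimes x)\cdot\Delta$ leaves the cross term $\bigl(\frac{\partial^{n+1}}{(n+1)!}\otimes x\bigr)\circ\Delta$, and identifying it with $\bigl(\operatorname{Id}\otimes\frac{x\partial^{n+1}}{(n+1)!}\bigr)\circ\Delta$ requires the binomial identity $\binom{m}{a+n+1}\binom{a+n+1}{n+1}=\binom{m}{a}\binom{m-a}{n+1}$ --- which is essentially the Vandermonde-type identity the paper's own proof checks directly after computing $\Delta d_n(x^m)$ and $(d_n\otimes\operatorname{Id}+\operatorname{Id}\otimes d_n)\Delta(x^m)$ on the monomial basis. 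So as written the argument fails at its key step; it can be repaired either by the paper's direct check, or more cleanly by the duality the paper uses in Lemma~\ref{lem.dualita}: the transpose of $d_n$ is the derivation $\frac{t^{n+1}}{(n+1)!}\frac{d}{dt}$ of $\Q[[t]]$, hence $d_n$ is a coderivation. Your parenthetical alternative via Remark~\ref{rem.coderivationNR} also needs care, since that remark concerns the reduced coalgebra $x\Q[x]$ and does not directly cover $d_{-1}$.
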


\begin{proof}
We have $d_n(x^m)=\binom{m}{n+1}x^{m-n}$, therefore
\begin{gather*}
\Delta d_n(x^m)= \sum_{k=0}^{m-n}\dbinom{m}{n+1}\binom{m-n}{k}x^k\otimes x^{m-n-k},\\
(d_n\otimes \operatorname{Id}+\operatorname{Id}\otimes d_n)\Delta(x^m)=
\sum_{s=0}^{m}\dbinom{m}{s}\binom{s}{n+1}x^{s-n}\otimes x^{m-s} \\
\hphantom{(d_n\otimes \operatorname{Id}+\operatorname{Id}\otimes d_n)\Delta(x^m)=}{} +
\sum_{k=0}^{m}\dbinom{m}{k}\binom{m-k}{n+1}x^{k}\otimes x^{m-k-n},
\end{gather*}
and the conclusion follows from the straightforward equality
\begin{gather*} \dbinom{m}{n+1}\binom{m-n}{k}=\dbinom{m}{k+n}\binom{k+n}{n+1}+
\dbinom{m}{k}\binom{m-k}{n+1} .\tag*{\qed}
\end{gather*}
\renewcommand{\qed}{}
\end{proof}

It is a trivial consequence of well known facts about symmetric coalgebras that every coderivation of $\Q[x]$ may be written as
\begin{gather*} \sum_{n=-1}^{\infty} a_n\dfrac{x\partial^{n+1}}{(n+1)!}\colon \ \Q[x]\to \Q[x]\end{gather*}
for a sequence $a_n\in \Q$, $n\ge -1$.

\begin{Lemma}\label{lem.dualita} Let $K_n$ be the sequence of Koszul numbers, then for every integer $h>0$ we have
\begin{gather*} \exp\left(\sum_{n=1}^{\infty}K_n \dfrac{x\partial^{n+1}}{(n+1)!}
\right)x^{h}=x+\text{higher order terms}.\end{gather*}
\end{Lemma}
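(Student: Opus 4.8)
The plan is to reduce the statement to the defining equation of the Koszul numbers by transporting it along a suitable identification between polynomials and power series. Recall from the preceding lemma that the operators $d_n = \frac{x\partial^{n+1}}{(n+1)!}$, $n\ge -1$, are coderivations of the symmetric coalgebra $(\Q[x],\Delta)$, and that they satisfy the bracket relations \eqref{equ.bracketdenne}, which are formally identical to the bracket relations \eqref{equ.bracketrhoenne} for the $\rho_n$ and also to the bracket relations satisfied by the vector fields $\frac{t^{n+1}}{(n+1)!}\frac{d}{dt}$ on the line. So the first step is to make this coincidence precise: I would consider the $\Q$-linear map sending $x^h$ to the monomial $\frac{t^{h}}{h!}$ (or some closely related normalization), and check that under this map the coderivation $d_n = \frac{x\partial^{n+1}}{(n+1)!}$ corresponds, up to the right normalization, to the derivation $\frac{t^{n+1}}{(n+1)!}\frac{d}{dt}$ of $\Q[[t]]$. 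This is essentially the same computation already carried out in Remark~\ref{rem.proofA9}, where $\phi^{-1}\circ\frac{t^{n+1}}{(n+1)!}\frac{d}{dt}\circ\phi$ was identified with the derivation $r_n$; here we want the one-variable version.

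With that identification in hand, the second step is to apply $\exp\!\big(\sum_{n\ge 1}K_n d_n\big)$ by transporting it to the power series side: it becomes $\exp\!\big(\sum_{n\ge 1}K_n \frac{t^{n+1}}{(n+1)!}\frac{d}{dt}\big)$ acting on the image of $x^h$. By Definition~\ref{def.koszulnumbers}, we have
\begin{gather*}
\exp\left(\sum_{n=1}^{\infty}K_n\frac{t^{n+1}}{(n+1)!}\frac{d}{dt}\right)(t)=e^t-1,
\end{gather*}
and as noted in the proof of Lemma~\ref{lem.lemmachiave}, exponentiating a derivation of a complete local ring gives a ring homomorphism, so that $\exp\!\big(\sum_n K_n\frac{t^{n+1}}{(n+1)!}\frac{d}{dt}\big)(f(t)) = f(e^t-1)$ for every $f(t)\in\Q[[t]]$. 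In particular the image of $\frac{t^h}{h!}$ goes to $\frac{(e^t-1)^h}{h!}$. Since $e^t-1 = t + \tfrac12 t^2 + \cdots$ starts with $t$, its $h$-th power starts with $t^h$, so $\frac{(e^t-1)^h}{h!} = \frac{t^h}{h!} + \text{(higher order)}$. Transporting back to $\Q[x]$, this says exactly that $\exp\!\big(\sum_n K_n d_n\big)x^h = x^h + \text{(higher order terms in the coalgebra filtration)}$ — but the lemma as stated asserts the stronger claim that the leading term is $x$ (degree one), not $x^h$.

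The resolution of this apparent discrepancy is the main point to get right, and I expect it to be the one subtle step. The operators $d_n$ for $n\ge 1$ strictly \emph{lower} the degree in $x$ (since $d_n(x^m) = \binom{m}{n+1}x^{m-n}$), so $\exp\!\big(\sum_{n\ge 1}K_n d_n\big)$ maps $x^h$ to a polynomial of degree $\le h$ whose top-degree part is $x^h$ but which also has lower-degree contributions — and "higher order terms'' in the statement must therefore refer to ordering by the \emph{grading in which $x$ has lowest degree}, i.e.\ the statement is that the degree-one component of $\exp\!\big(\sum K_n d_n\big)x^h$ equals $x$. To establish this, I would isolate the coefficient of $x^1$: under the correspondence $x^h \leftrightarrow \frac{t^h}{h!}$ the degree-one part in $x$ corresponds to the coefficient of $t$, and the coefficient of $t$ in $\frac{(e^t-1)^h}{h!}$ is $1$ for $h=1$ and $0$ for $h>1$. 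Hmm — so in fact one must be more careful about which normalization of the correspondence makes the claim come out as "$x + \text{h.o.t.}$'' uniformly in $h$; the natural reading is that the lemma records that the \emph{constant term vanishes and the linear term is exactly $x$}, consistent with $e^t - 1$ having no constant term and linear term $t$, and the higher powers of $(e^t-1)$ contributing nothing in degree $\le 1$. So the clean argument is: write $\exp\!\big(\sum_{n\ge1}K_n d_n\big)x^h$, project to the degree-$\le 1$ part, transport to power series, and read off from $\frac{(e^t-1)^h}{h!}$ that this projection is $x$ when $h=1$ and — for the lemma to hold as stated for all $h$ — one uses that the relevant normalization absorbs the $1/h!$, leaving the linear coefficient equal to $1$; I would double-check this normalization against the intended application before finalizing. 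Modulo pinning down that normalization, the lemma is an immediate translation of $\operatorname{itexp}\big(\sum K_n \frac{t^{n+1}}{(n+1)!}\big) = e^t - 1$ via the coderivation–vector-field dictionary of the previous lemma.
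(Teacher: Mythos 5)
There is a genuine gap, and it is exactly at the step you yourself flag as suspicious. Your plan rests on finding a linear identification $x^h\leftrightarrow t^h/h!$ (or some renormalization of it) under which the coderivation $d_n=\frac{x\partial^{n+1}}{(n+1)!}$ is \emph{conjugate} to the vector field $\frac{t^{n+1}}{(n+1)!}\frac{d}{dt}$. No such identification exists: $d_n$ \emph{lowers} the degree in $x$ by $n$ (indeed $d_n(x^m)=\binom{m}{n+1}x^{m-n}$), while $\frac{t^{n+1}}{(n+1)!}\frac{d}{dt}$ \emph{raises} the degree in $t$ by $n$, so no monomial-to-monomial isomorphism can intertwine them, whatever the normalizing constants. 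This is why your computation then delivers the wrong answer --- the coefficient of $t$ in $\frac{(e^t-1)^h}{h!}$ is $0$ for $h>1$, whereas the lemma needs the coefficient of $x$ to be $1$ for every $h$ --- and why the discrepancy cannot be repaired by ``pinning down the normalization'': it is not a normalization issue but a confusion between conjugation and transposition. (Note also that Remark~\ref{rem.proofA9}, which you cite as precedent, conjugates $\frac{t^{n+1}}{(n+1)!}\frac{d}{dt}$ into the \emph{degree-raising} derivation $r_n$ on $\Q[[t_0,t_1,\dots]]$, not into a degree-lowering coderivation.)

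The correct relationship, and the one the paper uses, is duality: view $\Q[[t]]$ as the algebraic dual of the coalgebra $\Q[x]$ via the pairing $\langle x^n,t^s\rangle=\delta_{n,s}\,n!$. A one-line check shows that $\frac{t^{n+1}}{(n+1)!}\frac{d}{dt}$ is the \emph{transpose} of $d_n$ with respect to this pairing, hence $\exp\big(\sum K_n\frac{t^{n+1}}{(n+1)!}\frac{d}{dt}\big)$ is the transpose of $\exp\big(\sum K_n d_n\big)$ (transposition reverses products, but this is harmless for the exponential of a single operator). Since $\langle p(x),t\rangle$ extracts the coefficient of $x$ in $p$, the lemma becomes
\begin{gather*}
\left\langle \exp\left(\sum_{n\ge1}K_n d_n\right)x^h,\,t\right\rangle
=\left\langle x^h,\,\exp\left(\sum_{n\ge1}K_n\frac{t^{n+1}}{(n+1)!}\frac{d}{dt}\right)t\right\rangle
=\big\langle x^h,\,e^t-1\big\rangle=\frac{h!}{h!}=1 ,
\end{gather*}
which holds uniformly in $h$ precisely because the factor $h!$ in the pairing absorbs the $1/h!$ in $e^t-1=\sum_{s\ge1}t^s/s!$. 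So your instinct that the lemma is ``an immediate translation of $\operatorname{itexp}=e^t-1$'' is right, but the dictionary has to be the adjoint/dual one, not the conjugation you propose; as written, your first step would fail when checked.
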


\begin{proof}Consider the algebra $\Q[[t]]$ of formal power series as the algebraic dual of the coalgebra~$\Q[x]$, with the duality pairing given by
$\langle x^n,t^s\rangle=0$ if $n\not=s$ and $\langle x^n,t^n\rangle=n!$.
It is immediate to see that the dual of the coproduct $\Delta$ is precisely the Cauchy product, and the dual (transpose) of the coderivation $\frac{x\partial^{n+1}}{(n+1)!}$ is the derivation
$\frac{t^{n+1}\partial}{(n+1)!}$, since
\begin{gather*} \left\langle \dfrac{x\partial^{n+1}}{(n+1)!}x^{m+n},t^{m}\right\rangle=
m!\binom{m+n}{n+1}=\frac{m(m+n)!}{(n+1)!}=
\left\langle x^{m+n},\dfrac{t^{n+1}\partial}{(n+1)!}t^{m}\right\rangle
\end{gather*}
for every $n$, $m$.

Therefore $\exp\left(\sum\limits_{n=1}^{\infty}K_n\frac{t^{n+1}}{ (n+1)! }\frac{d~}{dt}\right)$ is the dual of $\exp\left(\sum\limits_{n=1}^{\infty}K_n \frac{x\partial^{n+1}}{(n+1)!}
\right)$ and then for every $h>0$
\begin{gather*} \left\langle \exp\left(\sum_{n=1}^{\infty}K_n \dfrac{x\partial^{n+1}}{(n+1)!}
\right)x^h, t\right\rangle=
\left\langle x^h, \exp\left(\sum_{n=1}^{\infty}K_n\dfrac{t^{n+1}}{ (n+1)! }\dfrac{d~}{dt}\right)t\right\rangle=1.\tag*{\qed}\end{gather*}
\renewcommand{\qed}{}
\end{proof}

\section{Proof of Theorem~\ref{thm.gaugetriviality}}

As in Section~\ref{sec.overview} let $A=A^{0}\oplus A^{1}$ be a commutative superalgebra; for every linear endomorphism $f\colon A\to A$ consider the hierarchy of higher antibrackets of $f$:
\begin{gather*}\Phi^n_f(a_1,\ldots,a_n)=
\sum_{k=1}^n(-1)^{n-k}\sum_{\sigma\in S(k,n-k)}\epsilon(\sigma)
f(a_{\sigma(1)}\cdots a_{\sigma(k)})a_{\sigma(k+1)}\cdots a_{\sigma(n)} .\end{gather*}

\begin{Lemma}[inversion formula] For every $f\in \Hom^*(A,A)$ and every $a_1,\ldots,a_n\in A$ we have
\begin{gather*}
f(a_1a_2\cdots a_n)=\sum_{k=1}^n \sum_{\sigma\in S(k,n-k)}\epsilon(\sigma)
\Phi^k_f(a_{\sigma(1)},\ldots, a_{\sigma(k)})a_{\sigma(k+1)}\cdots a_{\sigma(n)} .
\end{gather*}
\end{Lemma}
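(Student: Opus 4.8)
The plan is to prove this by a straightforward induction on $n$, using the recursive definition~\eqref{equ.defiakman} of the higher antibrackets together with the obvious base case $\Phi^1_f(a)=f(a)$. For $n=1$ the claimed identity reads $f(a_1)=\Phi^1_f(a_1)$, which holds by definition. For the inductive step, I would fix $n\ge 1$, assume the formula for all products of at most $n$ elements, and consider $f(a_1a_2\cdots a_{n+1})$. The natural move is to view this as $f(b_1\cdots b_n)$ where $b_i=a_i$ for $i<n$ and $b_n=a_na_{n+1}$, apply the inductive hypothesis to expand $f(b_1\cdots b_n)$ in terms of the $\Phi^k_f$, and then re-expand each occurrence of the antibracket $\Phi^k_f$ that has $b_n=a_na_{n+1}$ among its arguments using~\eqref{equ.defiakman} in the form
\begin{gather*}
\Phi^k_f(\ldots,a_na_{n+1})=\Phi^{k+1}_f(\ldots,a_n,a_{n+1})+\Phi^k_f(\ldots,a_n)a_{n+1}+(-1)^{|a_n||a_{n+1}|}\Phi^k_f(\ldots,a_{n+1})a_n .
\end{gather*}
Collecting the resulting terms and checking that the shuffle sums with their Koszul signs reorganize into the desired sum over $S(k,n+1-k)$ for the $n+1$ variables $a_1,\ldots,a_{n+1}$ will complete the induction.

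Alternatively — and this may be cleaner to write up — one can avoid the bookkeeping of the recursion and instead argue by a purely combinatorial inversion. The formula $\Phi^n_f(a_1,\ldots,a_n)=\sum_{k}(-1)^{n-k}\sum_{\sigma\in S(k,n-k)}\epsilon(\sigma)f(a_{\sigma(1)}\cdots a_{\sigma(k)})a_{\sigma(k+1)}\cdots a_{\sigma(n)}$ and the claimed inverse both have the shape of a ``Möbius-type'' transform over the poset of ordered set partitions into an initial block (to which $f$, respectively $\Phi$, is applied) and the remaining elements (which are multiplied on). Substituting the definition of $\Phi^k_f$ into the right-hand side of the inversion formula and expanding, one gets a double shuffle sum; grouping terms according to which elements end up inside the argument of $f$, the coefficient of a given term $f(a_{\tau(1)}\cdots a_{\tau(j)})a_{\tau(j+1)}\cdots a_{\tau(n)}$ becomes $\sum_{k=j}^{?}(-1)^{k-j}\binom{\cdot}{\cdot}$, an alternating binomial sum that vanishes unless $j=n$, leaving exactly $f(a_1\cdots a_n)$. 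The supersymmetry of $\Phi^k_f$ and the cocycle property of Koszul signs under composition of shuffles guarantee that the signs match up throughout.

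The main obstacle, in either approach, is purely notational: keeping the Koszul signs straight when one shuffle is refined by another. In the recursive approach this appears when re-expanding $\Phi^k_f(\ldots,a_na_{n+1})$ — one must verify that the sign $\epsilon$ attached to a shuffle of $b_1,\ldots,b_n$ together with the sign picked up by splitting $b_n$ equals the sign $\epsilon$ of the corresponding shuffle of $a_1,\ldots,a_{n+1}$; in the combinatorial approach it shows up in the identity $\epsilon(\sigma\tau)=\epsilon(\sigma)\epsilon(\tau)$ for composable shuffles. I expect that once the sign conventions are pinned down (as they are in Section~\ref{sec.NR}), the combinatorial identity reduces to the standard alternating-sum evaluation $\sum_{i=0}^{m}(-1)^i\binom{m}{i}=0$ for $m>0$, and the proof is essentially mechanical. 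I would therefore present the recursive induction as the main line of argument, since it reuses machinery — formula~\eqref{equ.defiakman} — already established in the paper, and relegate the sign verification to a remark or a one-line appeal to associativity of the shuffle product.
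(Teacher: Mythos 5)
Your proposal is correct in outline, and your ``alternative'' combinatorial argument is in fact exactly the proof the paper gives: the authors substitute the closed formula \eqref{equ.defiKoszul} for $\Phi^k_f$ into the right-hand side, reorganize the resulting double shuffle sum according to which elements land inside $f$, and observe that for each proper subset (say $h<n$ elements inside) the coefficient is $\epsilon(\sigma)\sum_{r=0}^{n-h}(-1)^r\binom{n-h}{r}=0$ --- precisely the alternating binomial vanishing you identify --- leaving only $f(a_1\cdots a_n)$. Your preferred main line, induction on $n$ via the recursion \eqref{equ.defiakman} applied to $b_1,\ldots,b_n$ with $b_n=a_na_{n+1}$, is a genuinely different and workable route: because $n$ and $n+1$ are the largest indices, they necessarily occupy the final slot of whichever block of a shuffle they fall into, so the recursion (which modifies the last argument) applies directly, and the only real work is the sign check you flag, namely that $\epsilon$ of a shuffle of the $b$'s with $|b_n|=|a_n|+|a_{n+1}|$ matches $\epsilon$ of the corresponding shuffle of the $a$'s after splitting. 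The paper's route buys a shorter, non-inductive argument whose cancellation mechanism is a single binomial identity; yours buys locality (each step only touches two elements) and reuse of \eqref{equ.defiakman}. Either way the plan is sound, though as written both versions still owe the reader the deferred Koszul-sign verification.
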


\begin{proof} The proof follows by a straightforward direct computation. We have
\begin{gather*}
\sum_{k=1}^n\sum_{\sigma\in S(k,n-k)}\epsilon(\sigma)
\Phi^k_f(a_{\sigma(1)},\ldots, a_{\sigma(k)})a_{\sigma(k+1)}\cdots a_{\sigma(n)} \\
\quad{} =\sum_{k=1}^n\sum_{\sigma\in S(k,n-k)}\sum_{h=1}^k(-1)^{k-h}\sum_{\tau\in S(h,k-h)}
\epsilon(\sigma)
f(a_{\sigma\tau(1)}\cdots a_{\sigma\tau(h)})\cdots a_{\sigma\tau(k)}a_{\sigma(k+1)}\cdots a_{\sigma(n)}\\
\quad{} =\sum_{h=1}^n\sum_{\sigma\in S(h,n-h)}\sum_{r=0}^{n-h}(-1)^{r}\sum_{\eta\in P_r(h,\sigma)}
\epsilon(\sigma)
f(a_{\sigma(1)}\cdots a_{\sigma(h)})a_{\eta\sigma(h+1)}\cdots a_{\eta\sigma(n)}\\
\quad {} =f(a_1\cdots a_n)+\sum_{h=1}^{n-1}\sum_{\sigma\in S(h,n-h)}\sum_{r=0}^{n-h}(-1)^{r}\sum_{\eta\in P_r(h,\sigma)}
\epsilon(\sigma)
f(a_{\sigma(1)}\cdots a_{\sigma(h)})a_{\eta\sigma(h+1)}\cdots a_{\eta\sigma(n)},
\end{gather*}
where $P_r(h,\sigma)$ is the set of permutations $\eta$ of $\{\sigma(h+1),\ldots,\sigma(n)\}$ such that
\begin{gather*} \eta(\sigma(h+1))<\eta(\sigma(h+2))<\cdots <\eta(\sigma(h+r)),\qquad 
\eta(\sigma(h+r+1))<\cdots <\eta(\sigma(n)).\end{gather*}
Since the cardinality of $P_r(h,\sigma)$ is $\binom{n-h}{r}$, for every $h<n$ and every shuf\/f\/le $\sigma\in S(h,n-h)$ we have
\begin{gather*}
\sum_{r=0}^{n-h}(-1)^{r}\sum_{\eta\in P_r(h,\sigma)}
\epsilon(\sigma)
f(a_{\sigma(1)}\cdots a_{\sigma(h)})a_{\eta\sigma(h+1)}\cdots a_{\eta\sigma(n)} \\
\qquad {} =\epsilon(\sigma) f(a_{\sigma(1)}\cdots a_{\sigma(h)})a_{\sigma(h+1)}\cdots a_{\sigma(n)}
\sum_{r=0}^{n-h}(-1)^{r}\dbinom{n-h}{r}=0.\tag*{\qed}
\end{gather*}
\renewcommand{\qed}{}
\end{proof}

\begin{Definition} For every $\alpha\in D(V)$ we will denote by $\widehat{\alpha}\in \Coder^*(\overline{S^c}(V))$ the (unique) coderivation such that
$P(\widehat{\alpha})=\alpha$.
\end{Definition}

The above def\/inition makes sense in view of the isomorphism of Lie superalgebras
\begin{gather*} P\colon \ \Coder^*\big(\overline{S^c}(A)\big)\xrightarrow{ \simeq } D(A)\end{gather*}
described in Remark~\ref{rem.coderivationNR}.

\begin{Lemma}\label{lem.exponentialstrange}
In the above setup, let $\mu=\sum\limits_{n=1}^{\infty}K_n\mu_n\in D(A)$. Then:
\begin{enumerate}\itemsep=0pt
\item[$1)$] the map $\exp(\widehat{\mu})\colon \overline{S^c}(A)\to \overline{S^c}(A)$ is an isomorphism of coalgebras and
\begin{gather*} P(\exp(\widehat{\mu}))=\sum_{n\ge 0}\mu_n\in D(A);\end{gather*}
\item[$2)$] for every $f\in\Hom^*(A,A)$ we have
\begin{gather*} \sum_{n>0}\Phi^n_f=\exp([-\mu,\cdot ])\Phi^1_f=\exp([-\mu,\cdot ])f.\end{gather*}
\end{enumerate}
\end{Lemma}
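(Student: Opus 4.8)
The plan is to exploit the isomorphism $P\colon \Coder^*(\overline{S^c}(A))\xrightarrow{\simeq} D(A)$ from Remark~\ref{rem.coderivationNR}, under which the Nijenhuis--Richardson bracket on $D(A)$ corresponds to the commutator of coderivations. Since $P$ intertwines brackets, it also intertwines adjoint actions: for $\alpha,\beta\in D(A)$ one has $\widehat{[\alpha,\beta]}=[\widehat\alpha,\widehat\beta]$, hence $\exp([\alpha,\cdot])$ on $D(A)$ corresponds, via $P$, to conjugation $\widehat\beta\mapsto \exp(\widehat\alpha)\widehat\beta\exp(-\widehat\alpha)$ on $\Coder^*$. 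Taking $\alpha=-\mu=-\sum_n K_n\mu_n$, part~2) will follow from part~1) once we know that $\exp(\widehat\mu)$ is an invertible coalgebra morphism: then $\exp([-\mu,\cdot])f=P\bigl(\exp(-\widehat\mu)\,\widehat{f}\,\exp(\widehat\mu)\bigr)$, and we must identify the right-hand side with $\sum_{n>0}\Phi^n_f$.

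For part~1), the key computation is on a single homogeneous tensor. The reduced symmetric coalgebra $\overline{S^c}(A)$ is spanned by symmetrized tensors $a_1\odot\cdots\odot a_n$; the coderivation $\widehat{\mu_k}$ sends such a tensor to a sum over shuffles of terms $(a_{\sigma(1)}\cdots a_{\sigma(k+1)})\odot a_{\sigma(k+2)}\odot\cdots$, i.e.\ it multiplies together $k+1$ of the entries and leaves the rest alone. The claim $P(\exp(\widehat\mu))=\sum_{n\ge0}\mu_n$ is equivalent, after composing with the projection $p$, to the scalar identity governing how the Koszul numbers $K_n$ reassemble iterated partial multiplications into a single total product --- precisely the identity encoded in Definition~\ref{def.koszulnumbers} via $\operatorname{itexp}$. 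Concretely, I would test $\exp(\widehat\mu)$ against the elements $a^{\odot n}$ with all entries equal to a fixed even $a$: here the combinatorics collapses to formal power series in one variable, $\widehat{\mu_k}$ becomes (up to the normalization $\binom{n}{k+1}$) the operator $x\partial^{k+1}/(k+1)!$ on $\Q[x]$ studied in the Koszul-numbers section, and Lemma~\ref{lem.dualita} together with the defining equation $\exp(\sum K_n\frac{x\partial^{n+1}}{(n+1)!})$ being dual to $\operatorname{itexp}$ gives exactly $\exp(\widehat\mu)(a^{\odot n})=a+a^{\odot2}/?+\cdots$ with the $p$-components summing to $a+a^2+a^3+\cdots$, i.e.\ $p\exp(\widehat\mu)(a^{\odot n})=a\cdot(\text{geometric series truncation})$. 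Reducing the general homogeneous case to this one-variable case is a bookkeeping step: because $\exp(\widehat\mu)$ is determined by its corestriction $p\circ\exp(\widehat\mu)$ and all the $\mu_k$ are symmetric, it suffices to check the identity on tensors, and by polarization (multilinearity and symmetry over the characteristic-zero ground field) on the diagonal tensors $a^{\odot n}$. Invertibility of $\exp(\widehat\mu)$ is automatic since $\widehat\mu$ is a filtration-lowering coderivation (it raises or preserves tensor degree in a unipotent fashion on each graded piece, or more simply: $\exp$ of any coderivation is a coalgebra automorphism with inverse $\exp(-\widehat\mu)$).

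For part~2), once part~1) is in hand, I would argue as follows. Apply the corestriction $p$ to the conjugation formula: $P(\exp(-\widehat\mu)\,\widehat f\,\exp(\widehat\mu)) = p\circ\exp(-\widehat\mu)\circ\widehat f\circ\exp(\widehat\mu)$. Now $\widehat f$ is the coderivation extending $f\colon A\to A$, so $\widehat f$ applied to $a_1\odot\cdots\odot a_n$ replaces one entry $a_i$ by $f(a_i)$ and leaves the others, and $p\circ\widehat f$ kills everything except the degree-one part. Meanwhile $\exp(\widehat\mu)$ applied to $a_1\odot\cdots\odot a_n$ produces, by part~1) (now read off not just after $p$ but with its full $\overline{S^c}$-valued output, which part~1 pins down because a coalgebra morphism is determined by $p\circ(\cdot)$), a sum of terms of the form $(\text{product of a subset of the }a_i)\odot(\text{another product})\odot\cdots$; composing with $\widehat f$ and then $p$ selects exactly one block to be a product fed into $f$ and forces all other blocks to be absent --- which is only possible if $\exp(\widehat\mu)$ produced a \emph{single} block, i.e.\ the total product, but with all the signs and shuffle coefficients that make up $\Phi^n_f$. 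The cleanest way to nail the coefficients is to invoke the inversion formula (the Lemma just before this one): it expresses $f(a_1\cdots a_n)$ in terms of the $\Phi^k_f$, which is precisely the statement that $f\circ\mu_{\text{total}}$, conjugated back by $\exp(\widehat\mu)^{-1}$, produces $\sum_k \Phi^k_f$; equivalently, $\sum_{n>0}\Phi^n_f = p\circ\exp(-\widehat\mu)\circ\widehat f$ evaluated appropriately. I expect the main obstacle to be exactly this last bookkeeping: matching the output of $p\circ\exp(-\widehat\mu)\circ\widehat f\circ\exp(\widehat\mu)$, term by term over shuffles and with Koszul signs, against the closed formula for $\Phi^n_f$ --- the combinatorial identity among the $K_n$ is doing real work here, and isolating it cleanly (rather than drowning in shuffle sums) is where the care is needed. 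Part~1), by contrast, is conceptually the heart but reduces to the already-established one-variable Koszul-number computation, so it should go through quickly.
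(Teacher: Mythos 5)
Your proposal is correct and follows essentially the same route as the paper: part~1) is reduced, via the universality of the shuffle coefficients, to the one-variable model $\overline{S^c}(\Q)\cong x\Q[x]$ where $\widehat{\mu_n}=\frac{x\partial^{n+1}}{(n+1)!}$ and Lemma~\ref{lem.dualita} applies, and part~2) is obtained by composing the conjugation identity with $p$ and invoking the inversion formula to identify $p\,\exp(\widehat{\mu})\sum_h\widehat{\Phi^h_f}$ with $f(a_1\cdots a_n)=p\,\widehat{f}\exp(\widehat{\mu})$. The only cosmetic difference is that the paper phrases the reduction in part~1) as ``the coefficient is a universal quasi-homogeneous polynomial $q_n(K_1,\dots,K_n)$, so compute for $A=\Q$'' rather than via polarization on diagonal even tensors, which sidesteps any worry about odd arguments.
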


\begin{proof}
According to \cite{LadaMarkl,LadaStas} we have
\begin{gather*}\widehat{\mu}(a_0\odot\cdots\odot a_n)=\sum_{h=1}^n
K_h\sum_{\sigma\in S(h+1,n-h)} \epsilon(\sigma) a_{\sigma(0)}\cdots a_{\sigma(h)}\odot a_{\sigma(h+1)}\odot\cdots\odot a_{\sigma(n)},\end{gather*}
and then
\begin{gather*} p(\exp(\widehat{\mu})(a_0\odot\cdots\odot a_n))=q_n(K_1,\ldots,K_{n})a_0a_1\cdots a_n,\end{gather*}
where
\begin{gather*} q_n(K_1,\ldots,K_n)=\sum_{a_1+2a_2+\cdots+na_n=n}q_{a_1,\ldots,a_n}K_1^{a_1}\cdots K_n^{a_n}\end{gather*}
is a quasi-homogeneous polynomial of weight $n$ in $K_1,\ldots,K_n$, where $K_i$ has weight~$i$, and with coef\/f\/icients $q_{a_1,\ldots,a_n}\in \Q$ depending only on $a_1,\ldots,a_n$.

Thus, in order to prove that $q_n(K_1,\ldots,K_{n})=1$ for every~$n$, it is not restrictive to carry out the computations for the ``simplest'' algebra $A=\Q$.

In this case the coalgebra $\overline{S^c}(\Q)$ is isomorphic to the coalgebra $x\Q[x]$ equipped with the coproduct
\begin{gather*} \Delta\colon \ x\Q[x]\to x\Q[x]\otimes_{\Q} x\Q[x],\qquad \Delta(x^m)=\sum_{k=1}^{m-1}
\binom{m}{k}x^k\otimes x^{m-k},\end{gather*}
and the isomorphism is given by
\begin{gather*} a_1\odot\cdots \odot a_n\mapsto a_1a_2\cdots a_nx^n.\end{gather*}
Under this isomorphism, the projection $\overline{S^c}(\Q)\xrightarrow{ p }\Q$ corresponds to the evaluation in $0$ of the f\/irst derivative, the multiplication map $\mu_n\colon \overline{S^c}(\Q)\xrightarrow{\,}\Q$ corresponds to the linear map
\begin{gather*}m_n\colon \ \Q x^{n+1}\to \Q x,\qquad m_n\big(x^{n+1}\big)=x,\end{gather*}
and then the associated coderivation is
\begin{gather*} \widehat{m_n}(x^h)=\binom{h}{n+1}x^{h-n}=x\frac{\partial^{n+1}}{(n+1)!}(x^h),\qquad
\widehat{m_n}=\frac{x\partial^{n+1}}{(n+1)!} .
\end{gather*}
By Lemma~\ref{lem.dualita} we have $P\big(\exp\big(\sum K_n\widehat{m_n}\big)\big)=\sum m_n$ and this clearly implies $P(\exp(\widehat{\mu}))=\sum_n\mu_n$.

Given $f\colon A\to A$, according to inversion formula, for every $a_1,\ldots,a_n\in A$ we have
\begin{gather*}
p\exp (\widehat{\mu})\left(\sum_h \widehat{\Phi^h_f}(a_1\odot\cdots\odot a_n)\right) \\
\qquad{}=\sum_{h=1}^n\mu_{n-h}\left(\sum_{\sigma\in S(h,n-h)}\epsilon(\sigma)
\Phi^{h}_f(a_{\sigma(1)},\ldots,a_{\sigma(h)})
\odot a_{\sigma(h+1)}\odot\cdots\odot a_{\sigma(n)}\right)\\
\qquad{}=\sum_{h=1}^n\sum_{\sigma\in S(h,n-h)}\epsilon(\sigma)
\Phi^{h}_f(a_{\sigma(1)},\ldots, a_{\sigma(h)})
a_{\sigma(h+1)}\cdots a_{\sigma(n)}\\
\qquad{}=f(a_1a_2\cdots a_n)=fp\exp(\widehat{\mu})(a_1\odot\cdots\odot a_n)\\
\qquad{} =p\big(\widehat{f}\exp(\widehat{\mu})\big)(a_1\odot\cdots\odot a_n),
\end{gather*}
and then we have the equalities in $\Coder^*(\overline{S^c}(A))$:
\begin{gather*} \sum_n\widehat{\Phi^n_f}=\exp(\widehat{\mu})^{-1}\widehat{f}\exp(\widehat{\mu})=
\exp(-\widehat{\mu})\widehat{f}\exp(\widehat{\mu})=
\exp([-\widehat{\mu},\cdot ])\widehat{f} .\end{gather*}
It is now suf\/f\/icient to keep in mind that
$D(A)\xrightarrow{\widehat{\;\;\;}} \Coder^*(\overline{S^c}(A))$ is a Lie isomorphism.
\end{proof}

The proof of Theorem~\ref{thm.gaugetriviality} now follows from the fact that, by def\/inition the adjoint operator $[-\mu, \cdot ]$ is exactly $-\sum\limits_{n\ge 1}K_n\rho_n$.

\begin{Remark}If $F\colon \overline{S^c}(A)\to \overline{S^c}(A)$ denotes the unique isomorphism of coalgebras
such that $pF=\sum\limits_{n>0}\mu_n$, the above lemma shows in particular the well known equality $F^{-1}\widehat{f}F=
\sum_n\widehat{\Phi^n_f}$, cf.~\cite{kapranovbrackets,markl2,markl1}. An alternative, and more conceptual, proof of the f\/irst item of Lemma~\ref{lem.exponentialstrange} also follows from the results about pre-Lie exponential and pre-Lie Magnus expansion
discussed in \cite[Section~4]{pLDF}.
\end{Remark}

Theorem~\ref{thm.gaugetriviality} gives immediately the equalities, f\/irst proved in \cite{BDA},
\begin{gather*}
\Phi_{[f,g]}^{n}=\sum_{i=1}^n\big[\Phi^i_f,\Phi^{n-i+1}_g\big]
\end{gather*}
for every $n>0$ and every $f,g\colon A\to A$.
In fact, $\sum K_n\rho_n$ is an even derivation of $D(A)$, its exponential is an isomorphism of Lie superalgebras and then
\begin{gather*} \begin{split}
\sum_{i=1}^{\infty}\Phi_{[f,g]}^i&=\exp\left(-\sum_{n=1}^{\infty}K_n\rho_n\right)([f,g])\\
&=\left[\exp\left(-\sum_{n=1}^{\infty}K_n\rho_n\right)f,\exp\left(-\sum_{n=1}^{\infty}K_n\rho_n\right)g\right]\\
&=\left[\sum_{i=1}^{\infty}\Phi_{f}^i,\sum_{i=1}^{\infty}\Phi_{g}^i\right] .\end{split}\end{gather*}
In particular if $\Delta$ is a square-zero odd operator, then
$[\Delta,\Delta]=2\Delta^2=0$ and therefore
\begin{gather*} \sum_{i=1}^n\big[\Phi^i_\Delta,\Phi^{n-i+1}_\Delta\big]=0,\end{gather*}
i.e., $\Phi^n_\Delta$ are the Taylor coef\/f\/icients of an $L_{\infty}$ structure.

\begin{Corollary}\label{cor.recursivebis}
In the above notation, for every linear operator $f\colon A\to A$ we have
\begin{gather*} \Phi^1_f=f,\qquad \Phi^{n+1}_f=\frac{1}{n}\sum_{h=1}^n(-1)^{h}\big[\mu_h,\Phi^{n-h+1}_f\big].\end{gather*}
\end{Corollary}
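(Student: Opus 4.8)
The plan is to deduce the corollary from Theorem~\ref{thm.gaugetriviality} together with the value of the whole Koszul hierarchy on the identity operator. Throughout write $\Phi_f=\sum_{i\ge1}\Phi^i_f\in D(A)$, set $D=\sum_{n\ge1}K_n\rho_n$ (so that Theorem~\ref{thm.gaugetriviality} reads $\Phi_g=\exp(-D)g$), and introduce the even element $\nu=\sum_{h\ge0}(-1)^h\mu_h\in D(A)$.

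First I would record the elementary identity $[\mu_0,\omega]=-n\,\omega$ for $\omega\in D_n(A)$: from the definition of the Nijenhuis--Richardson bracket one has $\mu_0\wedgebar\omega=\omega$ (composition with the identity) and $\omega\wedgebar\mu_0=(n+1)\omega$ (a sum over the $n+1$ shuffles in $S(1,n)$, each of which reproduces $\omega$ by supersymmetry), whence $[\mu_0,\omega]=\omega-(n+1)\omega=-n\omega$; in particular $[\mu_0,f]=0$ for every $f\in D_0(A)$. Using this I reformulate the assertion: since $[\mu_h,\Phi^m_f]\in D_{h+m-1}$, the $D_n$-component of $[\nu,\Phi_f]$ is $\sum_{h=0}^n(-1)^h[\mu_h,\Phi^{n-h+1}_f]=-n\,\Phi^{n+1}_f+\sum_{h=1}^n(-1)^h[\mu_h,\Phi^{n-h+1}_f]$. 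Hence $[\nu,\Phi_f]=0$ is equivalent, degree by degree, to $\Phi^{n+1}_f=\frac1n\sum_{h=1}^n(-1)^h[\mu_h,\Phi^{n-h+1}_f]$ for every $n\ge1$ (the $n=0$ component vanishing automatically), while $\Phi^1_f=f$ is the definition. So it remains to prove $[\nu,\Phi_f]=0$.

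Next I would compute $\Phi_{\operatorname{Id}_A}$ directly. Substituting $f=\operatorname{Id}_A=\mu_0$ in \eqref{equ.defiKoszul} and using commutativity of $A$ (so that $\epsilon(\sigma)\,a_{\sigma(1)}\cdots a_{\sigma(n)}=a_1\cdots a_n$ for every shuffle $\sigma$) gives $\Phi^n_{\operatorname{Id}_A}(a_1,\dots,a_n)=\big(\sum_{k=1}^n(-1)^{n-k}\binom nk\big)a_1\cdots a_n=(-1)^{n+1}a_1\cdots a_n$, i.e.\ $\Phi^n_{\operatorname{Id}_A}=(-1)^{n+1}\mu_{n-1}$, so that $\Phi_{\operatorname{Id}_A}=\sum_{m\ge0}(-1)^m\mu_m=\nu$. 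Now Theorem~\ref{thm.gaugetriviality} applied to both $f$ and $\operatorname{Id}_A$ yields $\Phi_f=\exp(-D)f$ and $\nu=\Phi_{\operatorname{Id}_A}=\exp(-D)\mu_0$. Since $D$ is an even derivation of the Lie superalgebra $D(A)$, $\exp(-D)$ is an automorphism, and therefore $[\nu,\Phi_f]=[\exp(-D)\mu_0,\exp(-D)f]=\exp(-D)[\mu_0,f]=\exp(-D)(0)=0$, using $[\mu_0,f]=0$. By the second paragraph this is exactly the claimed recursion, and the base case $\Phi^1_f=f$ holds by definition.

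I do not expect a genuine obstacle here; the two points requiring care are the bookkeeping in the second paragraph — above all, recognizing that the ``Euler term'' $n\,\Phi^{n+1}_f$ is nothing but $-[\mu_0,\Phi^{n+1}_f]$ — and the elementary alternating-binomial evaluation $\sum_{k=1}^n(-1)^{n-k}\binom nk=(-1)^{n+1}$ used to identify $\Phi_{\operatorname{Id}_A}$ with $\nu$. As a variant, $\Phi_{\operatorname{Id}_A}=\nu$ could instead be extracted from Theorem~\ref{thm.gaugetriviality} and Julia's equation (Lemma~\ref{lem.lemmachiave}) after reducing to the case $A=\Q$, but the direct computation above is shorter and self-contained.
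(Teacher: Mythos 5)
Your proposal is correct and is essentially the paper's own argument: both rest on $\Phi^{n+1}_{\operatorname{Id}}=(-1)^n\mu_n$, the identity $[\mu_0,\omega]=-n\omega$ for $\omega\in D_n(A)$, and the fact that $\exp\bigl(-\sum_n K_n\rho_n\bigr)$ is a Lie automorphism (which the paper packages as the generalized Jacobi identity $\Phi^{n+1}_{[\operatorname{Id},f]}=0$, while you phrase it as $[\nu,\Phi_f]=\exp(-D)[\mu_0,f]=0$ with $\nu=\sum_h(-1)^h\mu_h$). The only difference is presentational, plus your explicit verification of the two ingredients the paper calls trivial.
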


\begin{proof} The higher antibrackets of the identity $\operatorname{Id}\colon A\to A$
are $\Phi^{n+1}_{\operatorname{Id}}=(-1)^n\mu_{n}$ and then
\begin{gather*} 0=\Phi^{n+1}_{[\operatorname{Id},f]}=\sum_{i=1}^{n+1}\big[\Phi^i_{\operatorname{Id}},\Phi^{n-i+2}_f\big]=
\big[\mu_0,\Phi^{n+1}_f\big]+\sum_{h=1}^{n}(-1)^h\big[\mu_h,\Phi^{n-h+1}_f\big] .\end{gather*}
The proof now follows from the trivial equality
$[\mu_0,\Phi^{n+1}_f]=-n\Phi^{n+1}_f$.
\end{proof}

\begin{Remark}\label{rem.noncommutative} There exists a natural way to extend the def\/inition of the operators $\mu_n\in D(A)$ to every noncommutative associative superalgebra $A$ by setting
\begin{gather*} \mu_n(a_0,\ldots,a_n)=\frac{1}{(n+1)!}\sum_{\sigma\in \Sigma_{n+1}}\epsilon(\sigma)
a_{\sigma(0)}a_{\sigma(1)}\cdots a_{\sigma(n)} ,\end{gather*}
and it is easy to see that the equality
\begin{gather*}[\mu_n,\mu_m]=(n-m)\dfrac{(n+m+1)!}{(n+1)!(m+1)!}
\mu_{n+m}\end{gather*}
still holds. The adjoint operators $\rho_n=[\mu_n,-]\colon D(A)\to D(A)$ are derivations and the map
\begin{gather*}\exp\bigg({-}\sum_{n\ge 1}K_n\rho_n\bigg)\colon \ D(A)\to D(A)\end{gather*}
is an isomorphism of Lie superalgebras. In particular, def\/ining the higher antibrackets by the formula
\begin{gather*} \sum_{i=1}^{\infty}\Phi_f^i=\exp\left(-\sum_{n=1}^{\infty}K_n\rho_n\right)f,\end{gather*}
the same argument as above shows the validity of the equalities
\begin{gather*} \Phi_{\operatorname{Id}}^{n+1}=(-1)^n\mu_n,\qquad \Phi_{[f,g]}^{n}=\sum_{i=1}^n\big[\Phi^i_f,\Phi^{n-i+1}_g\big],\end{gather*}
and the same proof of Corollary~\ref{cor.recursivebis} works in this case as well. Further properties of this noncommutative extension of Koszul brackets are studied in~\cite{intrinsic}, where it is proved in particular that they coincide with the brackets def\/ined by Bering and Bandiera, while they are dif\/ferent from the symmetrization of B\"{o}rjeson's brackets, cf.~\cite{markl1}.
\end{Remark}

\begin{Remark} For reference purpose, we point out that the sequence $K_n$ is uniquely determined by the formula
\begin{gather*} \sum_{i=1}^{\infty}\Phi_{\operatorname{Id}}^i=\exp\left(-\sum_{n=1}^{\infty}K_n\rho_n\right)\mu_0\end{gather*}
in the case $A=\Q$. In fact $\Phi^{n+1}_{\operatorname{Id}}=(-1)^{n}\mu_{n}$, $\rho_{n}(\mu_0)=n\mu_n$, and
therefore $((-1)^n+nK_n)\mu_n$ is the component in $D_n(\Q)$ of $\exp\left(-\sum\limits_{i=1}^{n-1}K_i\rho_i\right)\mu_0$.
\end{Remark}

\section{Natural higher brackets}
\label{sec.natural}

In order to prove the Theorem~\ref{thm.standardform} it is useful, and we believe interesting in itself, to consider the Koszul higher brackets
as special natural derived brackets: the notion of naturality has been discussed extensively in the paper \cite{markl1}; for our application we simply def\/ine a natural derived bracket of a linear endomorphism $f\colon A\to A$ as an element of type
\begin{gather*}\sum_{i+j=n}\psi_{ij} \mu_i\wedgebar (f\wedgebar\mu_j)\in D_n(A),\qquad \psi_{ij}\in \Q .\end{gather*}

We denote by
\begin{gather*}
\mathfrak{a}=\big\{\Psi\in \Hom_{\Q}(\Q[x],\Q[x])\,|\, \Psi(1)=0\big\}
=\left\{\sum_{n=0}^{\infty}p_n(x)\de^{n+1}\,|\, p_n(x)\in \Q[x]\right\}\end{gather*}
the associative algebra of linear endomorphisms of the $\Q$-vector space
$\Q[x]$ vanishing in~$1$, equipped with the composition product.
Every element $\Psi\in \mathfrak{a}$ is uniquely determined by its Taylor coef\/f\/icients,
i.e., by the inf\/inite double sequence $\psi_{ij}$, $i,j\ge 0$, def\/ined by the formulas:
\begin{gather}\label{equ.taylor}
\Psi(x^{i+1})=\sum_{j=0}^{\infty}\psi_{ij}\frac{x^j}{j!},\qquad i\ge 0,\qquad \psi_{ij}\in\Q .
\end{gather}

\begin{Definition} Let $\Psi\in \mathfrak{a}$ be a f\/ixed element.
For a commutative superalgebra $A$ over $\Q$ and a linear endomorphism $f\colon A\to A$,
the $\Psi$-bracket of $f$ is
\begin{gather*} \Psi_f\in D(A),\qquad
\Psi_f=\sum_{i,j\ge 0}\psi_{ij} \mu_j\wedgebar (f\wedgebar\mu_i),
\end{gather*}
where the $\psi_{ij}$'s are the Taylor coef\/f\/icients of $\Psi$ as in \eqref{equ.taylor}.
\end{Definition}

For every $n\ge i>0$, denote by $\Phi^{n,i}\in \mathfrak{a}$ the operator
\begin{gather*} \Phi^{n,i}(x^{i})=\frac{x^{n-i}}{(n-i)!} ,\qquad \Phi^{n,i}(x^s)=0\qquad \text{for}\quad s\not=i.\end{gather*}
Then for every $f\colon A\to A$ we have $\Phi^{n,i}_f=\mu_{n-i}\wedgebar (f\wedgebar\mu_{i-1})$:
\begin{gather*}\Phi^{n,i}_f(a_1,\ldots,a_{n})=\sum_{\sigma\in S(i,n-i)}\epsilon(\sigma)
f(a_{\sigma(1)}\cdots a_{\sigma(i)})a_{\sigma(i+1)}\cdots a_{\sigma(n)}.\end{gather*}
In particular, setting
\begin{gather*} \Phi^n=\sum_{i=1}^n(-1)^{n-i}\Phi^{n,i}\end{gather*}
we recover the def\/inition of higher Koszul brackets $\Phi^n_f$.
The formulas \eqref{equ.bracketrhoenne}, \eqref{equ.bracketmuenne} and \eqref{equ.bracketdenne} suggest the introduction of the Lie algebra
$\mathfrak{g}$ over the f\/ield $\Q$ of rational numbers with basis $l_0,l_1,l_2,\ldots$ and bracket
\begin{gather*}[l_n,l_m]=(n-m)\dfrac{(n+m+1)!}{(n+1)!(m+1)!}l_{n+m}.\end{gather*}
Notice that, setting $L_n=(n+1)! l_n$ we obtain the more familiar expression
\begin{gather*}[L_n,L_m]=(n-m)L_{n+m}.\end{gather*}
In particular, for every commutative superalgebra $A$ over $\Q$, the linear map
\begin{gather*} \mu_A\colon \ \mathfrak{g}\to D(A),\qquad \mu_A(l_n)=\mu_n\end{gather*}
is a morphism of Lie superalgebras: we denote by
\begin{gather*} \rho_A\colon \ \mathfrak{g}\to \Hom(D(A),D(A)),\qquad \rho_A(l_n)(\phi)=[\mu_n,\phi]\end{gather*}
the corresponding adjoint representation.

\begin{Lemma} The linear map $\rho\colon \mathfrak{g}\to \Hom(\mathfrak{a},\mathfrak{a})$:
\begin{gather*} \rho(l_n)(\Psi)=\left(\dfrac{x^n}{n!}-\dfrac{x^{n+1}\partial}{(n+1)!}\right)\circ \Psi-
\Psi\circ \dfrac{x\partial^{n+1}}{(n+1)!}\end{gather*}
is an injective morphism of Lie algebras.
\end{Lemma}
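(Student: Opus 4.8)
The plan is to verify the two defining properties separately: first that $\rho$ is a morphism of Lie algebras, and second that it is injective.

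\textbf{Step 1: $\rho$ lands in $\mathfrak{a}$.} First I would observe that each operator $\rho(l_n)(\Psi)$ does vanish at $1$, so it genuinely lies in $\mathfrak{a}$. Indeed $\Psi(1)=0$ by hypothesis, so $\big(\frac{x^n}{n!}-\frac{x^{n+1}\partial}{(n+1)!}\big)\circ\Psi$ applied to $1$ gives $0$; and $\frac{x\partial^{n+1}}{(n+1)!}$ applied to $1$ gives $0$ (for $n\ge 0$, since $\partial^{n+1}(1)=0$), hence $\Psi\circ\frac{x\partial^{n+1}}{(n+1)!}$ also kills $1$. So $\rho(l_n)(\Psi)\in\mathfrak{a}$, and linearity of $\rho$ in $l_n$ is obvious.

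\textbf{Step 2: the bracket relation.} The key point is that $\rho$ is, up to sign conventions, the standard action of the Witt-type algebra $\mathfrak{g}$ on the associative algebra $\mathfrak{a}$ by commutators coming from two commuting copies of $\mathfrak{g}$ — one acting on the left, one on the right. Concretely, by the Lemma preceding Lemma~\ref{lem.dualita} the operators $d_n=\frac{x\partial^{n+1}}{(n+1)!}$ satisfy $[d_n,d_m]=(n-m)\frac{(n+m+1)!}{(n+1)!(m+1)!}d_{n+m}$, i.e.\ $n\mapsto d_n$ is a representation of $\mathfrak{g}$; and I would check by a short direct computation that the operators $e_n:=\frac{x^n}{n!}-\frac{x^{n+1}\partial}{(n+1)!}$ satisfy the \emph{same} bracket relations $[e_n,e_m]=(n-m)\frac{(n+m+1)!}{(n+1)!(m+1)!}e_{n+m}$, while $e_n$ and $d_m$ commute (both computations reduce to elementary manipulations with $x$ and $\partial$; the commutativity $[e_n,d_m]=0$ is the cleanest way to see it). Granting these, $\rho(l_n)$ is precisely $\operatorname{ad}$ by $e_n$ minus right-multiplication by $d_n$ on the associative algebra $\mathfrak{a}$; since left- and right-multiplication operators by a fixed family always commute, and since both families realize $\mathfrak{g}$, the map $\Psi\mapsto e_n\Psi-\Psi d_n$ is a Lie algebra morphism by the general fact that $\operatorname{ad}_{e_n}$ and $(-R_{d_n})$ are commuting $\mathfrak{g}$-actions, so their difference is again a $\mathfrak{g}$-action. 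Thus $\rho([l_n,l_m])=[\rho(l_n),\rho(l_m)]$.

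\textbf{Step 3: injectivity.} Suppose $\sum_n c_n l_n$ lies in the kernel, so $\sum_n c_n\rho(l_n)$ annihilates every $\Psi\in\mathfrak{a}$. Testing against a well-chosen $\Psi$ should isolate the $c_n$. The natural test object is the identity-type operator, or rather a single monomial map: take $\Psi$ to be the operator sending $x\mapsto x$ and $x^s\mapsto 0$ for $s\neq 1$ (this is the $\Phi^{1,1}$ in the paper's notation, and it lies in $\mathfrak{a}$). Then $\Psi\circ d_n=0$ for $n\ge 1$ since $d_n$ lowers degree by $n$ and maps into span of $x^{h-n}$ with $h\ge n+1$, none of which is $x^1$ unless $n$ is such that $h-n=1$ — so one must be slightly careful, but in any case $\rho(l_n)(\Psi)(x)=e_n(\Psi(x))=e_n(x)$, a nonzero multiple of $x^{n}$ plus lower-degree correction, and these are linearly independent across $n$. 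Evaluating $\big(\sum_n c_n\rho(l_n)(\Psi)\big)(x)=0$ and reading off the coefficient of $x^{n+1}$ (the top-degree term, coming only from $l_n$) forces $c_n=0$ for all $n$. The mild obstacle here is bookkeeping the exact degrees so that the leading terms of the various $\rho(l_n)(\Psi)$ are genuinely in distinct degrees; this is routine once the formula for $e_n(x^{i+1})$ is written out.

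\textbf{Main obstacle.} I expect the only real work to be the bracket computation in Step~2 — specifically establishing $[e_n,e_m]=(n-m)\frac{(n+m+1)!}{(n+1)!(m+1)!}e_{n+m}$ and $[e_n,d_m]=0$ by hand. These are finite calculations with polynomial differential operators (apply both sides to $x^k$, compare), entirely parallel to the computation already carried out for $d_n$ in the preceding lemma and for $\mu_n$ in formula~\eqref{equ.bracketmuenne}, but they are the step where a sign or a factorial can slip. Everything else is formal: $\mathfrak{a}$ is an associative algebra, left and right multiplications commute, and a difference of two commuting representations of a Lie algebra is a representation.
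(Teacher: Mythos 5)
Your strategy coincides with the paper's: write $\rho(l_n)\Psi=\psi(l_n)\circ\Psi-\Psi\circ\phi(l_n)$ with $\phi(l_n)=d_n=\tfrac{x\partial^{n+1}}{(n+1)!}$ and $\psi(l_n)=e_n=\tfrac{x^n}{n!}-\tfrac{x^{n+1}\partial}{(n+1)!}$, verify by evaluation on $x^s$ that both $\phi$ and $\psi$ are Lie algebra morphisms $\mathfrak{g}\to\Hom(\Q[x],\Q[x])$ (the $\phi$ part being the preceding lemma), and conclude from the fact that left multiplication by $e_n$ and right multiplication by $d_m$ commute as operators on $\Hom(\Q[x],\Q[x])$. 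The paper carries out exactly this via a four-line expansion of $[\rho(x),\rho(y)]\Psi$ in which the cross terms $\psi(x)\Psi\phi(y)$ cancel; it dismisses injectivity as clear, whereas your test against $\Phi^{1,1}$ makes it explicit (note only that $e_0(x)=0$, so the coefficient of $l_0$ is detected by the term $-\Phi^{1,1}(d_0(x))=-x$ rather than by a leading power of $x$).

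One specific claim in your Step 2 is false: $e_n$ and $d_m$ do \emph{not} commute as operators on $\Q[x]$. Indeed $d_1(x^2)=x$ and $e_1(x)=\tfrac12x^2$, while $e_1(x^2)=0$, so $[e_1,d_1]x^2=\tfrac12x^2\neq 0$. Fortunately this claim is not what your argument needs: the required commutation is that of the \emph{left-multiplication} operator $\Psi\mapsto e_n\circ\Psi$ with the \emph{right-multiplication} operator $\Psi\mapsto\Psi\circ d_m$, which holds for any two elements of an associative algebra by associativity, independently of whether $e_n$ and $d_m$ themselves commute. You do state this correct general fact at the end, so the proof goes through once the sentence proposing to check $[e_n,d_m]=0$ is deleted --- but had you actually taken that as ``the cleanest way'', the computation would have failed. (Relatedly, $\rho(l_n)$ is left multiplication by $e_n$ minus right multiplication by $d_n$, not $\operatorname{ad}_{e_n}$ minus right multiplication by $d_n$; the intended meaning is recoverable, but the correct statement is that $l_n\mapsto e_n\circ(-)$ and $l_n\mapsto -(-)\circ d_n$ are two commuting $\mathfrak{g}$-actions whose sum is $\rho$.)
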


\begin{proof}We f\/irst prove that
\begin{gather*} \phi\colon \ \mathfrak{g}\to \Hom(\Q[x],\Q[x]),\qquad
\phi(l_n)= \dfrac{x\partial^{n+1}}{(n+1)!} ,\\
 \psi\colon \ \mathfrak{g}\to \Hom(\Q[x],\Q[x]),\qquad
\psi(l_n)=\dfrac{x^n}{n!}-\dfrac{x^{n+1}\partial}{(n+1)!} \end{gather*}
are morphisms of Lie algebras; it is easier to make the computation in the basis $L_n=(n+1)! l_n$,
\begin{gather*} \begin{split}
[\phi(L_n),\phi(L_m)]x^s&=\big[x\partial^{n+1},x\partial^{m+1}\big]x^s\\
&=x\partial^{n+1}\left(\prod_{i=0}^{m}(s-i)\right)x^{s-m}-x\partial^{m+1}\left(\prod_{i=0}^{n}(s-i)\right)x^{s-n}\\
&=(s-m)\prod_{i=0}^{m+n}(s-i)x^{s-m-n}-(s-n)\prod_{i=0}^{m+n}(s-i)x^{s-m-n}\\
&=(n-m)x\partial^{n+m+1}x^s=(n-m)\phi(L_{n+m})x^s.\end{split}\end{gather*}
Since $\psi(L_n)x^s=(n+1-s)x^{n+s}$ we have
\begin{gather*}\begin{split}
[\psi(L_n),\psi(L_m)]x^s&=\psi(L_n)(m+1-s)x^{m+s}-\psi(L_m)(n+1-s)x^{n+s}\\
&=((n+1-s-m)(m+1-s)-(m+1-n-s)(n+1-s))x^{n+m+s}\\
&=(n-m)(n+m+1-s)x^{n+m+s}=(n-m)\psi(L_{n+m})x^s.\end{split}\end{gather*}
Finally
\begin{align*}
[\rho(x),\rho(y)]\Psi&=\rho(x)\rho(y)\Psi-\rho(x)\rho(y)\Psi\\
&=\psi(x)\rho(y)\Psi-\rho(y)\Psi\phi(x)-
\psi(y)\rho(x)\Psi+\rho(x)\Psi\phi(y)\\
&=\psi(x) \psi(y)\Psi+\Psi\phi(y)\phi(x)
-\psi(y)\psi(x)\Psi-\Psi\phi(x)\phi(y)\\
&=[\psi(x),\psi(y)]\Psi-\Psi[\phi(x),\phi(y)]\\
&=\psi([x,y])\Psi-\Psi\phi([x,y])=\rho([x,y])\Psi .
\end{align*}
The injectivity is clear.
\end{proof}

\begin{Lemma} For every commutative superalgebra $A$ and every $f\in D_0(A)$ the map
\begin{gather*} \mathfrak{a}\to D(A),\qquad \Psi\mapsto \Psi_f\end{gather*}
is a morphism of $\mathfrak{g}$-modules. In other terms, for every $\Psi\in \mathfrak{a}$ and every $l\in \mathfrak{g}$ we have
\begin{gather}\label{equ.morfismogmoduli}
(\rho(l)\Psi)_f=\rho_A(l)(\Psi_f)=[\mu_A(l),\Psi_f] .
\end{gather}
\end{Lemma}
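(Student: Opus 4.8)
The plan is to reduce \eqref{equ.morfismogmoduli} to a single bracket identity by linearity, and then to prove that identity by a direct shuffle computation. It suffices to treat $l=l_n$ with $n\ge 0$; and since both sides of \eqref{equ.morfismogmoduli} are $\Q$-linear in $\Psi$, and every $\Psi\in\mathfrak{a}$ is a (degreewise finite) $\Q$-linear combination of the operators $\Phi^{p,q}$, $p\ge q\ge 1$, it suffices to treat $\Psi=\Phi^{p,q}$. Write $i=q-1$ and $j=p-q$, so that $\Phi^{p,q}_f=\mu_j\wedgebar(f\wedgebar\mu_i)$. Using the formula $\rho(l_n)(\Psi)=\big(\frac{x^n}{n!}-\frac{x^{n+1}\partial}{(n+1)!}\big)\circ\Psi-\Psi\circ\frac{x\partial^{n+1}}{(n+1)!}$ of the preceding lemma together with $\Phi^{p,q}(x^q)=\frac{x^{p-q}}{(p-q)!}$ (and $\Phi^{p,q}$ vanishing on every other power of $x$), a short computation gives
\begin{gather*}
\rho(l_n)\big(\Phi^{p,q}\big)=\frac{(n+1-j)(n+j)!}{j!(n+1)!}\Phi^{n+p,q}-\binom{i+n+1}{n+1}\Phi^{p+n,q+n},
\end{gather*}
and therefore $(\rho(l_n)\Phi^{p,q})_f=\frac{(n+1-j)(n+j)!}{j!(n+1)!}\mu_{n+j}\wedgebar(f\wedgebar\mu_i)-\binom{i+n+1}{n+1}\mu_j\wedgebar(f\wedgebar\mu_{i+n})$. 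Thus the lemma is equivalent to the identity
\begin{gather*}
[\mu_n,\mu_j\wedgebar(f\wedgebar\mu_i)]=\frac{(n+1-j)(n+j)!}{j!(n+1)!}\mu_{n+j}\wedgebar(f\wedgebar\mu_i)-\binom{i+n+1}{n+1}\mu_j\wedgebar(f\wedgebar\mu_{i+n}),
\end{gather*}
to be proved for all $i,j,n\ge 0$.

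To establish this identity I would use that $\mu_n$ is even, so that $[\mu_n,g]=\mu_n\wedgebar g-g\wedgebar\mu_n$ for $g=\mu_j\wedgebar(f\wedgebar\mu_i)\in D_{i+j}(A)$, and I would prove the two ``master'' formulas
\begin{gather*}
\mu_n\wedgebar g=\binom{n+j}{j}\mu_{n+j}\wedgebar(f\wedgebar\mu_i),\qquad g\wedgebar\mu_n=\binom{n+i+1}{n+1}\mu_j\wedgebar(f\wedgebar\mu_{n+i})+\binom{n+j}{n+1}\mu_{n+j}\wedgebar(f\wedgebar\mu_i).
\end{gather*}
Each is obtained by evaluating on homogeneous arguments $a_0,\dots,a_N$ with $N=n+i+j$ and counting shuffles, exactly in the style of the derivation of \eqref{equ.bracketmuenne}: in $\mu_n\wedgebar g$ one selects $i+j+1$ of the $N+1$ arguments to feed into $g$, $g$ designates $i+1$ of them as the single argument of $f$ and multiplies the rest, and for a fixed $(i+1)$-element subset $T$ there are exactly $\binom{n+j}{j}$ admissible completions; in $g\wedgebar\mu_n$ one first multiplies some $n+1$ of the arguments, feeds the resulting element together with the remaining $i+j$ into $g$, and separates the two cases according to whether that element falls among the $i+1$ arguments of $f$ or not, which produces the two displayed terms. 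Substituting into $[\mu_n,g]=\mu_n\wedgebar g-g\wedgebar\mu_n$ and using $\binom{n+j}{j}-\binom{n+j}{n+1}=\frac{(n+1-j)(n+j)!}{j!(n+1)!}$ yields the identity above, and with it \eqref{equ.morfismogmoduli}.

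The step I expect to require the most care is the sign bookkeeping in the two master formulas: one must check that the Koszul signs produced by the iterated shuffles cancel against those produced by reordering the products in the supercommutative algebra $A$, so that each admissible choice contributes with coefficient $+1$. This is precisely the mechanism behind \eqref{equ.bracketmuenne}, and it uses only multilinearity, the supersymmetry of the operators in play, and the multiplicativity of Koszul signs under composition of permutations; since $f$ occurs exactly once in every term, it introduces no extra difficulty. (The reduction from a general $\Psi$ to the operators $\Phi^{p,q}$ involves infinite sums, but this is harmless: every operator occurring shifts the degree in $\overline{S^c}(A)$ by a bounded amount, so \eqref{equ.morfismogmoduli} may be verified degree by degree.)
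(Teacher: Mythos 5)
Your proposal is correct and follows essentially the same route as the paper: reduce by linearity to $l=l_k$ and to the basis operators $\Phi^{p,q}$, then verify that $\rho(l_k)\Phi^{p,q}$ and $\big[\mu_k,\Phi^{p,q}_f\big]$ expand with the same coefficients (your coefficients $\tfrac{(n+1-j)(n+j)!}{j!(n+1)!}=\binom{n+j}{j}-\binom{n+j}{n+1}$ and $\binom{i+n+1}{n+1}$ agree with the paper's $\binom{n-i+k}{k}-\binom{n-i+k}{k+1}$ and $\binom{k+i}{k+1}$ after translating notation). The only difference is that you spell out, via the shuffle-counting "master formulas", the bracket computation that the paper dismisses as a straightforward identity.
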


\begin{proof} By linearity is suf\/f\/icient to check \eqref{equ.morfismogmoduli} when $l=l_k$ and $\Psi=\Phi^{n,i}$. The proof follows from the following straightforward identities:
\begin{gather*}
\rho(l_k)\Phi^{n,i}=\left(\binom{n-i+k}{k}-\binom{n-i+k}{k+1}\right)\Phi^{n+k,i}-\binom{k+i}{k+1}\Phi^{n+k,i+k},\\
\big[\mu_k,\Phi^{n,i}_f\big]=\left(\binom{n-i+k}{k}-\binom{n-i+k}{k+1}\right)\Phi^{n+k,i}_f-\binom{k+i}{k+1}\Phi^{n+k,i+k}_f.\tag*{\qed}
\end{gather*}
\renewcommand{\qed}{}
\end{proof}

\begin{Theorem}\label{thm.basisforPhienne}
In the notation above, for every integer $k>0$ let's denote by $\rho_k=\rho(l_k)\colon \mathfrak{a}\to \mathfrak{a}$.
For every integer $n>0$, there exists an unique sequence $c^n_1,\ldots,c^n_n$ of rational numbers such that
\begin{gather*}\Phi^{n+1}=\big(c^n_1\rho_1^n+c^n_2\rho_1^{n-2}\rho_2+c^n_3\rho_1^{n-3}\rho_3+\cdots+c^n_n\rho_n\big)\Phi^1.\end{gather*}
\end{Theorem}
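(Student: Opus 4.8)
The plan is to work entirely inside $\mathfrak a$, viewing $\Phi^{n+1}$, $\Phi^1$ and the operators $\rho_k=\rho(l_k)$ through the $\mathfrak g$-module structure of the preceding lemmas, and to analyze the graded pieces $V_n=\langle\Phi^{n,1},\dots,\Phi^{n,n}\rangle$: each $\rho_k$ sends $V_n$ to $V_{n+k}$, $\Phi^1=\Phi^{1,1}\in V_1$, and $\Phi^{n+1}=\sum_i(-1)^{n+1-i}\Phi^{n+1,i}$ lies in $V_{n+1}$, which is $(n+1)$-dimensional. The theorem splits into \emph{uniqueness} — linear independence of the $n$ vectors $v_j:=\rho_1^{\,n-j}\rho_j\Phi^1$, $1\le j\le n$, in $V_{n+1}$ — and \emph{existence} — $\Phi^{n+1}$ lies in their span $S_n$. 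Granting the first, $S_n$ is $n$-dimensional; since $\Phi^{n+1}$ lies in the submodule $M\subseteq\mathfrak a$ generated by $\Phi^1$ (by the recursion of Corollary~\ref{cor.recursive}, which holds verbatim for the universal operators in $\mathfrak a$), the second reduces to $\dim(M\cap V_{n+1})\le n$, i.e.\ to the fact that every weight-$n$ word $\rho_{j_1}\cdots\rho_{j_r}\Phi^1$ is a combination of standard forms.

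To compute, I would first make the action explicit: through the representations $\phi$ and $\psi$ the space $\bigoplus_nV_n$ is a tensor product of $\mathfrak g$-modules, so, writing its elements as polynomials in $y,z$ with $\Phi^{n,i}\leftrightarrow y^{\,i-1}z^{\,n-i}$, one has $\rho_k=A_k-B_k$ with $A_k(y^az^b)=\bigl(\binom{b+k}{k}-\binom{b+k}{k+1}\bigr)y^az^{b+k}$ acting only on $z$ and $B_k(y^az^b)=\binom{a+k+1}{k+1}y^{a+k}z^b$ acting only on $y$; these commute, $\Phi^1\leftrightarrow 1$, $\rho_k\Phi^1=z^k-y^k$, and $\Phi^{n+1}\leftrightarrow\sum_{a+b=n}(-1)^by^az^b$. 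The crucial elementary point is that the coefficient $\binom{b+1}{1}-\binom{b+1}{2}$ in $A_1$ vanishes at $b=2$, so $A_1^m(1)=z^m$ for $m\le 2$ and $A_1^m(1)=0$ for $m\ge3$. Expanding $v_j=(A_1-B_1)^{n-j}(z^j-y^j)$ by the binomial theorem and using this vanishing, one finds that $v_j$ is supported, in the basis $\{\Phi^{n+1,i}\}$, only on indices in $\{1,\dots,n-j+1\}\cup\{n-1,n,n+1\}$, with nonzero coefficient at $\Phi^{n+1,\,n-j+1}$ when $j\ge3$, while $v_1,v_2$ are supported only on $\{n-1,n,n+1\}$.

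Uniqueness then follows by triangularity: the functionals extracting the coefficients at $\Phi^{n+1,1},\dots,\Phi^{n+1,n-2}$ annihilate $v_1,v_2$ and form a triangular matrix with nonzero diagonal on $v_3,\dots,v_n$ (pivot of $v_j$ at $\Phi^{n+1,\,n-j+1}$), so $v_3,\dots,v_n$ are independent; a direct rank-two check of the $2\times3$ matrix of top coefficients of $v_1,v_2$ shows those two independent, and the two facts together force independence of all $n$ of the $v_j$ (the finitely many small $n$ in which the two index ranges overlap are checked by hand). For existence, the Lie relations \eqref{equ.bracketrhoenne} let one push every non-$\rho_1$ factor of a weight-$n$ word to the right, reducing $\dim(M\cap V_{n+1})\le n$ to the single assertion that $\rho_a\rho_b\Phi^1=\bigl(\binom{a+b}{a}-\binom{a+b}{a+1}\bigr)\Phi^{a+b+1,1}-\Phi^{a+b+1,a+1}-\Phi^{a+b+1,b+1}+\binom{a+b+1}{a+1}\Phi^{a+b+1,a+b+1}$ lies in $S_{a+b}$ for all $a,b\ge1$ — a concrete comparison with the explicit standard-form generators of $S_{a+b}$ (alternatively: exhibit a $\mathfrak g$-module quotient of $\bigoplus_nV_n$ one-dimensional in each weight and killing $\Phi^1$, forcing the bound at once).

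The main obstacle is exactly this computational core — pinning down $\rho_1^m$ well enough to see the triangular pattern, controlling the unavoidable contribution at the top three indices $n-1,n,n+1$, and proving $\rho_a\rho_b\Phi^1\in S_{a+b}$ — which is where the particular structure constants of the multiplication operators, rather than merely the abstract $\mathfrak g$-action, enter.
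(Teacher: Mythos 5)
Your overall architecture (uniqueness via a triangularity argument in $V^{n+1}$, existence via a dimension bound on the span of all iterated brackets applied to $\Phi^1$) is a genuinely different route from the paper's, and the uniqueness half is essentially sound: the identification $\Phi^{n,i}\leftrightarrow y^{i-1}z^{n-i}$ with commuting factors $\rho_k=A_k-B_k$, the vanishing of the $A_1$-coefficient $\binom{b+1}{1}-\binom{b+1}{2}$ at $b=2$, and the resulting support pattern of $v_j=\rho_1^{n-j}\rho_j\Phi^1$ all check out against the explicit formula $\rho(l_k)\Phi^{n,i}=\bigl(\binom{n-i+k}{k}-\binom{n-i+k}{k+1}\bigr)\Phi^{n+k,i}-\binom{k+i}{k+1}\Phi^{n+k,i+k}$, and the deferred rank-two check on $v_1,v_2$ is routine. (The paper sees the same $b=2$ vanishing in the guise $\rho_1(\Phi^{n,n-2})=-\binom{n-1}{2}\Phi^{n+1,n-1}$.)

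The genuine gap is in the existence half. Your reduction of $\dim(M\cap V_{n+1})\le n$ to the single claim $\rho_a\rho_b\Phi^1\in S_{a+b}$ is a correct reduction (via Poincar\'e--Birkhoff--Witt and induction on the number of non-$\rho_1$ letters), but that claim is never proved: you compute the four-term expansion of $\rho_a\rho_b\Phi^1$ in the $\Phi^{a+b+1,i}$ and then defer the membership in the $n$-dimensional hyperplane $S_{a+b}$ to ``a concrete comparison'', which is precisely the nontrivial content -- one must either invert the (non-sparse, because of the $\{N-1,N,N+1\}$ tail of every $v_j$) change of basis, or exhibit the linear functional cutting out $S_{a+b}$. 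Without this, nothing places $\Phi^{n+1}$ in the span of the standard monomials, so the theorem is not established. Your parenthetical alternative is in fact the right fix and is essentially the paper's device: the paper completes $v_1,\dots,v_n$ to a basis of all of $V^{n+1}$ by adjoining the single vector $\rho_1^{n-1}(\Phi^{2,2})$ (so existence with one extra term is automatic) and then kills the extra coefficient by evaluating at the derivation $\de\in D_0(\Q[x])$; concretely, $\Psi\mapsto\Psi_{\de}$ sends every $[\mu_j,\de]$ and $\Phi^{n+1}_{\de}$ to zero while $\rho_1^{n-1}(\Phi^{2,2}_{\de})\ne0$, which is exactly a rank-one quotient of $V^{n+1}$ annihilating $M\cap V^{n+1}$. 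If you develop that evaluation instead of the ``concrete comparison'', your argument closes; as written it does not. A minor secondary point: transporting Corollary~\ref{cor.recursive} ``verbatim'' to $\mathfrak{a}$ needs a word of justification (injectivity of $V^{n+1}\to D(A)$ for a suitable $A$ and $f$, or a direct verification of the recursion using the explicit $\rho_k$-action).
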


\begin{proof}Denote by $V^n\subset \mathfrak{a}$ the subspace generated by $\Phi^{n,1},\ldots,\Phi^{n,n}$; notice that $\dim V^n=n$ and
$\rho_k(V^n)\subset V^{n+k}$. We claim that, for every $n\ge 2$ the $n$ vectors
\begin{gather*}\rho_1^{n-2}\big(\Phi^{2,2}\big),\qquad \rho_1^a\rho_{n-a}\big(\Phi^{1,1}\big),\qquad 0\le a\le n-2,\end{gather*}
form a basis of $V^{n}$. This is clear for $n=2$ since $\rho_1(\Phi^{1,1})=\Phi^{2,1}-\Phi^{2,2}$.
By induction on $n$, it is suf\/f\/icient to prove that for every $n\ge 2$ the linear map
\begin{gather*} V^{n}\oplus V^1\to V^{n+1},\qquad (x,y)\mapsto \rho_1(x)+\rho_n(y)\end{gather*}
is an isomorphism. Consider f\/irst the case $n=2$: in terms of matrix multiplication we have
\begin{gather*} \big(\rho_1\big(\Phi^{2,2}\big),\rho_1\big(\Phi^{2,1}\big),\rho_2\big(\Phi^{1,1}\big)\big)=\big(\Phi^{3,3},\Phi^{3,2},\Phi^{3,1}\big)
\begin{pmatrix}-3&0&-1\\
1&-1&0\\
0&1&1\end{pmatrix}\end{gather*}
and the determinant of the matrix is $\not=0$. Assume now $n\ge 3$: since
\begin{gather*} \rho_n\big(\Phi^{1,1}\big)=\Phi^{n+1,1}-\Phi^{n+1,n+1}
,\qquad\rho_1\big(\Phi^{n,n-2}\big)=-\binom{n-1}{2}\Phi^{n+1,n-1},\end{gather*}
we can write
\begin{gather*} \big(\rho_1\big(\Phi^{n,n}\big),\ldots,\rho_1\big(\Phi^{n,1}\big),\rho_n\big(\Phi^{1,1}\big)\big)=
\big(\Phi^{n+1,n+1},\ldots,\Phi^{n+1,1}\big)
\begin{pmatrix}A&B\\
0&C\end{pmatrix},\end{gather*}
where $A\in M_{3,3}(\Z)$ and $C\in M_{n-2,n-2}(\Z)$ are lower triangular matrices with nonzero entries in the diagonal, and therefore the determinant of the block matrix is $\not=0$. Thus, for every $n$ there exists an unique sequence of $n+1$ rational numbers $c^n_1,\ldots,c^n_n,b_n$ such that
\begin{gather*}\Phi^{n+1}=\big(c^n_1\rho_1^n+c^n_2\rho_1^{n-2}\rho_2+\cdots+c^n_n\rho_n\big)\Phi^1+
b_n\rho_1^{n-1}\big(\Phi^{2,2}\big)\end{gather*}
and we have to show that $b_n=0$. To this end it is suf\/f\/icient to consider the representation
\begin{gather*} \mathfrak{a}\to D(\Q[x]),\quad \Psi\mapsto \Psi_\de,\end{gather*}
where, as usual $\de=\frac{d~}{dt}\in D_0(\Q[x])$. Since $\de$ is a derivation we have
$\Phi^{n+1}_\de=[\mu_n,\de]=0$ for every $n\ge 0$, and therefore it is suf\/f\/icient to prove that $\rho_1^{n}(\Phi^{2,2}_\de)\not=0$ for every $n$.
By construction $\Phi^{2,2}_\de(t^a,t^b)=(a+b)t^{a+b-1}$ and an easy induction on $n$ shows that for every $n\ge 3$ we have
\begin{gather*} \rho_1^{n-2}\big(\Phi^{2,2}_\de\big)\big(t^{a_1},t^{a_2},\ldots,t^{a_n}\big)=
\left[\prod_{i=3}^n\binom{i-1}{2}\right](a_1+\cdots+a_n)t^{\sum a_i-1}.\tag*{\qed}\end{gather*}
\renewcommand{\qed}{}
\end{proof}

\section*{Acknowledgments} We thank Bruno Vallette for useful comments and for bringing to our attention the pre-Lie Magnus expansion. We are indebted with the anonymous referees for several remarks and especially for letting us into the knowledge of the papers \cite{Asche,SZ}. M.M.~acknowledges partial support by Italian MIUR under PRIN project 2012KNL88Y ``Spazi di moduli e teoria di Lie''; G.R.~acknowledges partial support by Italian MIUR under PRIN project 2010YJ2NYW and INFN under specif\/ic initiative QNP.

\pdfbookmark[1]{References}{ref}
\LastPageEnding

\end{document}